\documentclass[11pt]{amsart}

\usepackage{amssymb,amsmath}
\usepackage[]{graphicx}
\usepackage{amssymb, epsfig}
\usepackage{color}
%

%
%

%
\setlength{\textheight}{21.0truecm}
\setlength{\textwidth}{16.5truecm}
\setlength{\oddsidemargin}{0.00truecm}
\setlength{\evensidemargin}{0.00truecm}

\numberwithin{figure}{section} \numberwithin{equation}{section}

%
%

%
%



\newtheorem{theorem}{Theorem}[section]
\newtheorem{lemma}[theorem]{Lemma}

\newtheorem{definition}[theorem]{Definition}

\newtheorem{prop}[theorem]{Proposition}
\newtheorem{remark}[theorem]{Remark}
\usepackage{epsf,pstricks,multicol,pst-grad,color,pst-node,pst-text,pst-3d,pst-tree}
%

\begin{document}
\title[Malliavin Calculus for the Stochastic Cahn-Hilliard/Allen-Cahn]{Malliavin Calculus for the
\\ Stochastic Cahn-Hilliard/Allen-Cahn
Equation\\ with Unbounded Noise Diffusion}
\author{Dimitra C. Antonopoulou}
\address{Dimitra Antonopoulou, Department of Mathematics, University of
Chester, Thornton Science Park, CH2 4NU, Chester, UK, and,
Institute of Applied and Computational Mathematics, FORTH,
GR--711 10 Heraklion, Greece.}
\email{d.antonopoulou@chester.ac.uk}
\author{Dimitris Farazakis}
\address{Dimitris Farazakis, Department of Mathematics and Applied Mathematics, University of Crete,
GR--714 09 Heraklion, Greece.} \email{D.Farazakis@outlook.com}

\author{Georgia Karali}
\address{Georgia Karali, Department of Mathematics and Applied Mathematics, University of Crete,
GR--714 09 Heraklion, Greece, and, Institute of Applied and
Computational Mathematics, FORTH, GR--711 10 Heraklion, Greece.}
\email{gkarali@tem.uoc.gr}
\begin{abstract}
The stochastic partial differential equation analyzed in this
work, is motivated by a simplified mesoscopic physical model for
phase separation. It describes pattern formation due to adsorption
and desorption mechanisms involved in surface processes, in the
presence of a stochastic driving force. This equation is a
combination of Cahn-Hilliard and Allen-Cahn type operators with a
multiplicative, white, space-time noise of unbounded diffusion.
We apply Malliavin calculus, in order to investigate the
existence of a density for the stochastic solution $u$. In
dimension one, according to the regularity result in \cite{AKM},
$u$ admits continuous paths a.s. Using this property, and inspired by a
method proposed in \cite{CW1}, we construct a modified
approximating sequence for $u$, which properly treats the new second
order Allen-Cahn operator. Under a localization argument, we
prove that the Malliavin derivative of $u$ exists locally, and
that the law of $u$ is absolutely continuous, establishing thus
that a density exists.
\end{abstract}
\subjclass{35K55, 35K40, 60H30, 60H15.} \maketitle
\small{\textbf{Keywords:} stochastic partial differential
equations, reaction-diffusion equations, phase transitions,
Malliavin calculus.}

\section{Introduction}
\subsection{The Stochastic Model}


We consider the following stochastic partial differential equation which is given as a combination of
Cahn-Hilliard and Allen-Cahn type equations, perturbed by a
multiplicative space-time noise $\dot{W}$ with a non-linear
diffusion coefficient $\sigma$
\begin{equation} \label{sm}
u_t=-\varrho\Delta\Big{(}\Delta u-f(u)\Big{)} +\Big{(}\Delta
u-f(u)\Big{)}+\sigma(u) \dot{W},\;\;\;\;t>0,\;x\in\mathcal{D},
\end{equation}
where $\mathcal{D}\subset\mathbb{R}^d$, for $d=1,2,3$, is a
bounded spatial domain. Here, $f(u)=u^3-u$ is the derivative of a
double equal-well potential. The constant $\varrho>0$ is a
positive bifurcation parameter referring to an attractive
potential for the related physical model, while the noise
$\dot{W}=\dot{W}(x,t)$ is a space-time white noise in the sense
of Walsh, \cite{W}, given as the formal derivative of a Wiener
process. More specifically, $dW:=W(dx,ds)$ is a $d$-dimensional
space-time white noise, induced by the one-dimensional
$(d+1)$-parameter Wiener process $W$ defined as $W:=\big\{
W(t,x):\; t\in[0,T],\; x\in{\mathcal{D}}\big\}$. The noise
diffusion  $\sigma(u)$ has a sub-linear
 growth of the form
\begin{equation*}
\left. |\sigma(u) \right|\leq C(1+|u|^q),
\end{equation*}
for some $C >0$ and any $q \in (0,\frac{1}{3})$.

The initial and boundary value problem for this equation,
satisfies the initial condition
$$u(x,0)=u_0(x) \mbox{ }     \mbox {in  }   \mathcal{D}, $$ and the next homogeneous Neumann boundary
 conditions
 \begin{equation}\label{neu}
\frac{\partial{u}}{\partial{\nu}}=\frac{\partial{\Delta
u}}{\partial{\nu}}=0 \mbox{   } \mbox {on} \mbox{  }
\partial{\mathcal{D}}\times{[0,T)}.
\end{equation}

The Cahn-Hilliard equation was initially proposed as a simple
model for the description of the phase separation of a binary
alloy, being in a non-equilibrium state, \cite{CH}. Cook in
\cite{C}, extended the deterministic partial differential
equation to a stochastic one by introducing thermal fluctuations
in the form of an additive noise. There exist some interesting
results in the relevant literature on
 existence and uniqueness of solution for the
stochastic problem, as for example in \cite{CW1,PD}, where the
i.b.v.p. was posed on cubic domains, and rectangles, or on
Lipschitz domains of more general topography, \cite{AK}. In
\cite{BMW,PD,CW1,CW2}, the authors considered the version of an
odd polynomial nonlinearity for the potential. Moreover, in
\cite{ABK}, the one-dimensional stochastic Cahn-Hilliard equation
has been approximated by a manifold of solutions and the dynamics
of the stochastic motion of the fronts were described. In
\cite{BMW}, the effect of noise on evolving interfaces during the
initial stage of phase separation was analyzed, while in
\cite{AKO}, the singular limit of the generalized Cahn-Hilliard
equation has been rigorously derived by means of the Hilbert
expansion method, imitating the behavior of a stochastic model.
The sharp interface limit of the Cahn-Hilliard equation with
additive noise has been examined in \cite{ABK2}; in this case,
depending on the noise strength, the chemical potential satisfies
on the limit a deterministic or a stochastic Hele-Shaw problem of
Stefan type. Funaki studied the interface motion and applied a
singular perturbation analysis for the Allen-Cahn equation with
mild noise, when the initial data are close to an instanton,
\cite{Fun95,Fun99}. In the presence of a non-local integral term
the Allen-Cahn equation exhibits the mass conservation property;
 for the dynamics of the mass conserving stochastic Allen-Cahn
equation, we refer to the results presented in \cite{ABBK}.

In the deterministic setting, Karali and Katsoulakis, in
\cite{KK}, introduced a simplified mean field type model written
as a combination of Cahn-Hilliard and Allen-Cahn type equations,
in order to study the effect of diffusion and
adsorption/desorption in the context of surface processes.
Antonopoulou, Karali and Millet in \cite{AKM}, by inserting a
noise term additive in the equation and stemming from the free
energy and thermal fluctuations, derived the stochastic
non-linear equation version of the aforementioned model. There
in, the authors described the physical motivation of such a
stochastic forcing. In addition, they investigated the existence
and regularity of solution for the stochastic
Cahn-Hilliard/Allen-Cahn equation with unbounded noise diffusion,
when posed in dimensions $d=1,2,3$.

Our aim in this work, is to study the existence of a density for
the stochastic solution. The dimensions of the problem in spatial
coordinates are expected to play a crucial role. Note that in
dimensions $d=1$ the stochastic solution has continuous paths
a.s., while in higher dimensions existence of maximal solutions
has been established, \cite{AKM}.

\subsection{The Malliavin derivative}
Let $(\Omega,\mathcal{F},P)$ be a probability space, where
$\Omega$ is a sample space, $\mathcal{F}$ is a $\sigma$-algebra
consisting of subsets of $\Omega$ and $P$ a probability measure
$P:\mathcal{F}\rightarrow[0,1]$, and consider a random variable
$F:\Omega\rightarrow \mathbb{R}$. The Malliavin derivative
measures the rate of change of $F$ as a function of
$\omega\in\Omega$ and implements the idea of differentiating $F$
with respect to the chance parameter $\omega$, \cite{N}. When
$\Omega$ has a topological structure, the derivative operator is
induced by a directional Fr\'echet derivative of $F$ along a
certain direction $\omega_0$ in $\Omega$, of the form
$$\frac{d}{d\varepsilon}F(\omega+\varepsilon
\omega_0)|_{\varepsilon=0},$$ \cite{N}. The function $F$ can be a
stochastic process as for example the solution of a stochastic
pde (such as $u$ in \eqref{sm}). In our case, $\mathcal{F}$ is the
$\sigma$-algebra generated by the Wiener process $W:=\big\{
W(t,x):\; t\in[0,T],\; x\in{\mathcal{D}}\big\}$, and the relevant
topological structure is this of the Hilbert space
$L^2([0,T]\times \mathcal{D})$.

\subsection{Main Results}
We investigate if $u$, the solution of \eqref{sm}, as a random
variable, has a density; an affirmative answer is given by proving
that the law of $u$ is absolutely continuous.

Here, we follow the strategy proposed by Cardon-Weber in
\cite{CW1}, and approximate $u$ by a sequence $u_n$ for which we
prove existence of Malliavin derivative; we then check that a
certain norm of this derivative is almost surely strictly
positive. Strict positivity establishes the absolute continuity
of the sequence $u_n$ and on the limit, as $n\rightarrow\infty$,
the same result follows for $u$, cf. Subsections 3.1, 3.2.

We use carefully some important definitions and results from the
theory of Malliavin Calculus, presented by Nualart in \cite{N}.

More precisely, in dimension $d= 1$, we show that the stochastic
solution is locally differentiable in the sense of Malliavin
calculus. Under some non-degeneracy condition on the noise
diffusion coefficient, we prove that the law of the solution is
absolutely continuous with respect to the Lebesgue measure
$\mathbb{R}$.

Cardon-Weber in \cite{CW1} studied the stochastic Cahn-Hilliard
equation with bounded noise diffusion. In our case we consider a
more general problem; this of the stochastic
Cahn-Hilliard/Allen-Cahn equation with unbounded noise diffusion,
for which when $d=1$ in \cite{AKM}, the authors established
existence of a continuous solution a.s. This equation contains a
new second order nonlinear operator, fact that arises the use of
a new spde, quite different than this proposed in \cite{CW1},
which defines a proper approximating sequence $u_n$.
Additionally, we treat efficiently the existing growth of the
unbounded diffusion, by proving estimates in expectation in the
stronger $L^\infty(\mathcal{D})$-norm, in various places,
involving $u_n$ and its Malliavin derivative.

The novelty of this paper is the proof of Theorem \ref{mres}
(i.e., Theorem \ref{loc} and Theorem \ref{denu}), for the equation
\eqref{sm}, which consists a stochastic pde with a white
space-time noise and unbounded noise diffusion. This is an
important contribution to the literature of stochastic equations
stemming from physical problems, such as phase separation in the
presence of randomness. Our result is set in the very active area
of research on well posedness (existence and regularity) of
solutions of spdes. Moreover, these solutions are random
variables depending not only on space and time but also on the
parameter $\omega\in\Omega$. Hence, by proving that a density
exists for $u$, we integrate significantly the theoretical
analysis of this stochastic model.
\medskip

In particular, we prove the next Main Theorem.
\begin{theorem}\label{mres}
Let $u$ be the solution of the stochastic Cahn-Hilliard/Allen-Cahn equation \eqref{sm}, with the Neumann b.c. \eqref{neu}
in dimension $d=1$, for $\mathcal{D}:=(0,\pi)$, with smooth
initial condition $u_0$.

Let the noise diffusion $\sigma$ satisfy:
\begin{enumerate}
\item
$\sigma$ has a sublinear growth uniformly for any $x\in\mathbb{R}$
of the form
\begin{equation}\label{LEq}
|\sigma(x)|\leq C(1+|x|^q),
\end{equation}
for some $C >0$ and any $q \in (0,\frac{1}{3})$,
\item
$\sigma$ is Lipschitz on $\mathbb{R}$, i.e., there exists $K$:
\begin{equation}\label{s3}
|\sigma(x)-\sigma(y)|\leq K|x-y|,\;\;\forall\;x,\;y\in\mathbb{R},
\end{equation}
\item
$\sigma$ is continuously differentiable on $\mathbb{R}$ (i.e.,
$\exists\;\sigma'$, and $\sigma,\;\sigma'$ are continuous), and
since $\sigma'$ exists, due to \eqref{s3} it follows that
\begin{equation}\label{s4}
|\sigma'(x)|\leq K,\;\;\forall\;x\in\mathbb{R}.
\end{equation}
\end{enumerate}
Then the derivative of $u$ in the Malliavin sense exists locally
(cf. Theorem \ref{loc}).

Moreover, if, in addition, $\sigma$ is non-degenerate, i.e., there
exists $c_0>0$ such that
\begin{equation}\label{s1}
|\sigma(x)|\geq c_0>0,\;\;\forall\;x\in\mathbb{R},
\end{equation}
then the law of $u$ is absolutely continuous with respect to the
Lebesgue measure $\mathbb{R}$ (cf. Theorem \ref{denu}).
\end{theorem}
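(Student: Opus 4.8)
The plan is to follow the Cardon-Weber strategy, adapted to the new second-order Allen-Cahn term and the unbounded diffusion, proceeding in two stages corresponding to the two conclusions of the theorem.

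\medskip

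\textbf{Stage 1: local Malliavin differentiability.} First I would write the mild (evolution) formulation of \eqref{sm} using the semigroup $S(t)$ generated by the operator $-\varrho\Delta^2+\Delta$ with the Neumann boundary conditions \eqref{neu}, so that
\begin{equation*}
u(t,x)=\int_{\mathcal D}S(t)(x,y)u_0(y)\,dy-\int_0^t\!\!\int_{\mathcal D}\nabla_y G(t-s)(x,y)\cdot(\text{terms})\,dy\,ds+\int_0^t\!\!\int_{\mathcal D}S(t-s)(x,y)\sigma(u(s,y))\,W(dy,ds),
\end{equation*}
where the nonlinear contributions $-\varrho\Delta f(u)$ and $-f(u)$ are absorbed through the appropriate kernels (Green's kernel derivatives for the biharmonic part, $S(t-s)$ itself for the second-order part). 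Since $f(u)=u^3-u$ is only locally Lipschitz, one cannot work with $u$ directly; following \cite{CW1} I would introduce, for each $N$, a truncation $f_N$ that agrees with $f$ on $[-N,N]$ and is globally Lipschitz, solve the truncated equation to get $u^N$, and build a Picard iteration $u_n^N$ for it. The regularity result of \cite{AKM} in $d=1$ (continuity of paths a.s.) is what makes the localization work: on the event $\{\sup_{t,x}|u|\le N\}$ the processes $u$ and $u^N$ coincide. Then I would show by induction on the Picard index $n$ that $u_n^N(t,x)\in\mathbb D^{1,2}$, with Malliavin derivative satisfying a linear stochastic integral equation obtained by formally differentiating the mild formulation,
\begin{equation*}
D_{r,z}u_n^N(t,x)=S(t-r)(x,z)\sigma(u_{n-1}^N(r,z))+\int_r^t\!\!\int_{\mathcal D}\bigl[\text{kernels}\bigr]\,f_N'(u_{n-1}^N)\,D_{r,z}u_{n-1}^N\,dy\,ds+\int_r^t\!\!\int_{\mathcal D}S(t-s)(x,y)\,\sigma'(u_{n-1}^N)\,D_{r,z}u_{n-1}^N\,W(dy,ds),
\end{equation*}
and I would obtain uniform-in-$n$ bounds on $\mathbb E\|D u_n^N(t,x)\|_{L^2([0,T]\times\mathcal D)}^2$ (indeed, as the authors emphasize, bounds in the stronger $L^\infty(\mathcal D)$-norm to control the sublinear growth term $\sigma(u)$, using $q<\tfrac13$ and the smoothing of $S(t)$). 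Passing $n\to\infty$ and invoking the closability of $D$ gives $u^N(t,x)\in\mathbb D^{1,2}$, and then by the localization property $u(t,x)\in\mathbb D^{1,2}_{\mathrm{loc}}$, which is Theorem \ref{loc}.

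\medskip

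\textbf{Stage 2: absolute continuity of the law.} With local differentiability in hand, the criterion I would use is the standard one (Nualart, \cite{N}): if $F\in\mathbb D^{1,2}_{\mathrm{loc}}$ and $\|DF\|_{L^2([0,T]\times\mathcal D)}>0$ a.s., then the law of $F$ is absolutely continuous. So the task reduces to showing $\|D u(t,x)\|_{L^2}>0$ a.s. for fixed $(t,x)$ with $t>0$. The idea, again from \cite{CW1}, is to exploit the explicit leading term $D_{r,z}u(t,x)\approx S(t-r)(x,z)\sigma(u(r,z))$ for $r$ close to $t$: restricting the $L^2$-norm to a small time window $[t-\delta,t]$ and a small spatial neighbourhood of $x$, the dominant contribution is $\int_{t-\delta}^t\!\int S(t-r)(x,z)^2\sigma(u(r,z))^2\,dz\,dr$, which by the non-degeneracy \eqref{s1}, $|\sigma|\ge c_0$, is bounded below by $c_0^2\int_{t-\delta}^t\!\int S(t-r)(x,z)^2\,dz\,dr>0$, while the remaining (integral and stochastic-integral correction) terms are shown to be negligible as $\delta\to0$ via the moment estimates from Stage 1 together with kernel estimates for $S$ and its gradient. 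This yields a strictly positive lower bound, hence $\|D u(t,x)\|_{L^2}>0$ a.s., and Theorem \ref{denu} follows.

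\medskip

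\textbf{Main obstacle.} The principal difficulty, and the genuinely new point relative to \cite{CW1}, is the presence of the second-order nonlinear Allen-Cahn operator $\Delta u - f(u)$ (the non-biharmonic part), which changes the structure of the mild equation and of the Malliavin derivative equation: one must design the correct approximating spde $u_n$ that handles this term and still closes the a priori estimates. Technically this forces the $L^\infty(\mathcal D)$-type moment bounds the authors mention, because the kernel $S(t-s)(x,y)$ arising from the Allen-Cahn part has weaker integrability/smoothing properties than the Green's-kernel derivatives of the Cahn-Hilliard part, so the Gronwall-type argument controlling $\mathbb E\|D u_n\|^2$ has to be carried out carefully enough to absorb both the cubic term $f_N'(u_{n-1})=3u_{n-1}^2-1$ (bounded by $3N^2-1$ on the truncation) and the sublinearly growing $\sigma$. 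Verifying that the correction terms in Stage 2 really are negligible as $\delta\to0$ — which needs sharp short-time estimates on $\int_0^\delta\!\int S(r)(x,z)^2\,dz\,dr$ and on the gradient kernels — is the other place where care is required.
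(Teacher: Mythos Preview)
Your proposal is correct and follows essentially the same route as the paper: truncation of $f$ to a globally Lipschitz $f_n$, Picard iteration with $L^\infty(\mathcal D)$-moment bounds to get $u_n\in D^{1,2}$ (hence $u\in D^{1,2}_{\rm loc}$ via the localizing sets $\Omega_n=\{\sup|u|<n\}$), and then the small-window argument on $[t-\varepsilon,t]$ with the non-degeneracy of $\sigma$ and Markov's inequality to force $\|Du_n\|_H>0$ a.s. The only minor discrepancy is your claim that the Allen-Cahn kernel has weaker smoothing than the Cahn-Hilliard one: in the paper the nonlinear term enters as $[\Delta G-G]f_n(u_n)$ with $G$ the Green's function of the full operator, and it is the $\Delta G$ (Cahn-Hilliard) piece that has the worse singularity, not the $G$ (Allen-Cahn) piece --- but this does not affect the argument.
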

\begin{remark}\label{remch}
The above theorem is also valid in the more general case of
\begin{equation} \label{smv}
u_t=-\varrho\Delta\Big{(}\Delta u-f(u)\Big{)}
+\tilde{q}\Big{(}\Delta u-f(u)\Big{)}+\sigma(u)
\dot{W},\;\;\;\;t>0,\;x\in\mathcal{D},
\end{equation}
for $\varrho>0$ and $\tilde{q}\geq 0$, cf. Section 4 of \cite{AKM}
for the relevant discussion for the existence and regularity of
solution for this more general problem, and the observations for
the Green's function. In our case, when establishing existence of
a density, all our results hold true for \eqref{smv} also.

Thus, for $\varrho=1$, $\tilde{q}:=0$, the Main Theorem
\ref{mres} (existence of Malliavin derivative locally and of a
density for $u$) is valid for the one-dimensional stochastic
Cahn-Hilliard equation with unbounded noise diffusion and
non-smooth in space and in time space-time noise.
\end{remark}

The structure of the rest of this paper is as follows: Section 2
presents some basic definitions from Malliavin calculus such as
the definitions of the spaces of random variables $D^{1,2}$,
$L^{1,2}$, and their local versions $D^{1,2}_{\rm loc}$,
$L^{1,2}_{\rm loc}$. Moreover, due to the fact that $u$ is a.s.
continuous, we are able to approximate efficiently the solution
$u$ by some $u_n$ defined through an spde, for which we prove
existence of the Malliavin derivative; $u_n$ is proven to be a
localization in the Malliavin sense of $u$, which yields finally
the existence of the Malliavin derivative of $u$ locally. In
details, $u$ is written in the integral representation given by
\eqref{original}. This representation motivates the piece-wise
approximation $u_n$ definition as the solution of the spde
\eqref{piecewise ff}. Lemma \ref{lemma 2.1} establishes existence
and uniqueness of $u_n$, and provides a useful bound in
expectation. We then prove that the Malliavin derivative of $u_n$
is well defined, and that $u_n\in D^{1,2}$, cf. Proposition
\ref{LL234}; a direct consequence is the Main Theorem \ref{loc},
i.e., that $u$ belongs to $L^{1,2}_{\rm loc}\subseteq D^{1,2}_{\rm
loc}$.

In Section 3, we prove the absolute continuity of the
approximations $u_n$ which again through a localization argument
(see Remark \ref{locderiv}) yields the existence of a density for
the stochastic solution $u$. More specifically, we present first
the very technical Lemma \ref{leps}, where the growth of the
unbounded noise diffusion $\sigma$ is crucial. In the sequel,
under the additional assumption \eqref{s1} (non-degenerating
$\sigma$), we establish, in Theorem \ref{denu0}, the absolute
continuity of $u_n$, and thus, the existence of a density for $u$
(Main Theorem \ref{denu}).

For the rest of this paper, we consider $d=1$,
$\mathcal{D}:=(0,\pi)$, a smooth $u_0$, and the assumptions (1),
(2), (3), of the statement of Theorem \ref{mres}, for the
diffusion $\sigma$; for simplicity, we set in \eqref{sm}
$\varrho:=1$. The additional assumption \eqref{s1} for a
non-degenerate $\sigma$ appears only in the statements (and
proofs) of Theorems \ref{denu0}, \ref{denu}.
\section{Malliavin calculus}
\subsection{Basic definitions}
\begin{definition}\label{def1}
Following the notation of \cite{CW1}, we denote by $D^{1,2}$ the
set of random variables $v$ such that the Malliavin derivative
(in space and time) $D_{y,s}v(x,t)$ exists, for any
$y\in\mathcal{D}$ and any $s\geq 0$ and any
$(x,t)\in\mathcal{D}\times [0,T]$, and satisfies
\begin{equation}\label{m1}
\|v\|_{D^{1,2}}:=\Big{(}\mathbf{E}(|v|^2)+\mathbf{E}(\|D_{\cdot,\cdot}v\|_{L^2([0,T]\times\mathcal{D})}^2)\Big{)}^{1/2}<\infty,
\end{equation}
for
$$\|D_{\cdot,\cdot}v(x,t)\|_{L^2([0,T]\times\mathcal{D})}:=\Big{(}\int_0^T\int_\mathcal{D}|D_{y,s}v(x,t)|^2dyds\Big{)}^{1/2}.$$

Indeed, according to \cite{N}, p. 27 (where the definition of
$D^{1,p}$, $p\geq 1$, is given), $D^{1,2}$ is a Hilbert space and
consists the closure of the class of smooth random variables $v$
in the norm
$$\|v\|_{D^{1,2}}:=\Big{(}\mathbf{E}(|v|^2)+\mathbf{E}(\|D_{\cdot,\cdot}v\|_{H}^2)\Big{)}^{1/2},$$
where $\|\cdot\|_{H}$ is the norm induced by the inner product
$<\cdot,\cdot>_H$ and the norm $\|\cdot\|_{D^{1,2}}$ is induced by
the inner product
$$<f,g>:=\mathbf{E}(fg)+\mathbf{E}(<D_{\cdot,\cdot}f,D_{\cdot,\cdot}g>_H),$$
where, in our case, $H:=L^2([0,T]\times\mathcal{D})$ and
$<\cdot,\cdot>_H$ is the usual $L^2$ inner product on
$[0,T]\times\mathcal{D}$.
\end{definition}

\begin{definition}\label{def2}
The set $L^{1,2}$ is defined as the class of all stochastic
processes $v=v(x,t)\in L^2(\Omega\times[0,T]\times\mathcal{D})$,
i.e.,
$$\|v\|_{L^2(\Omega\times[0,T]\times\mathcal{D})}:=
\Big{(}\mathbf{E}\Big{(}\int_0^T\int_{\mathcal{D}}|v(x,t)|^2dxdt\Big{)}\Big{)}^{1/2}<\infty,$$
such that $v\in D^{1,2}$ and satisfy
$$\mathbf{E}\Big{(}\int_0^T\int_{\mathcal{D}}\int_0^T\int_{\mathcal{D}}|D_{y,s}v(x,t)|^2dydsdxdt\Big{)}<\infty,$$
cf. \cite{N}, p. 42 (to avoid any confusion, we point out that
the notation $T$ used by Nualart at p. 42, in our case
corresponds to $[0,T]\times\mathcal{D}$), and \cite{CW1}.
\end{definition}
\begin{definition}\label{def3}
According to \cite{N}, p. 49, for $L:=L^{1,2}$ (a class of
stochastic processes), $(L^{1,2})_{\rm loc}=:L_{\rm loc}^{1,2}$
is defined as the set of random variables $v$: $\exists$ a
sequence $\{(\Omega_n,v_n),\;n\geq 1\}\subset\mathcal{F}\times
L\;\;({\rm here\; }L:=L^{1,2})$ such that
\begin{enumerate}
\item $\Omega_n\uparrow\Omega$ a.s.,
\item $v=v_n$ a.s. on $\Omega_n$.
\end{enumerate}
Also, for $L:=D^{1,2}$ (a class of random variables),
$(D^{1,2})_{\rm loc}=:D_{\rm loc}^{1,2}$ is defined as the set of
random variables $v$: $\exists$ a sequence
$\{(\Omega_n,v_n),\;n\geq 1\}\subset\mathcal{F}\times L\;\;({\rm
here \;}L:=D^{1,2})$ such that
\begin{enumerate}
\item $\Omega_n\uparrow\Omega$ a.s.,
\item $v=v_n$ a.s. on $\Omega_n$.
\end{enumerate}
Here, $\mathcal{F}$ is the $\sigma$-algebra, while
$\Omega_n\uparrow\Omega$  a.s., is equivalent to
$\Omega_1\subseteq
\Omega_2\subseteq\Omega_3\subseteq\cdots\subseteq \Omega$, such
that
$$\displaystyle{\lim_{n\rightarrow\infty}}P(\Omega_n)=P(\Omega)=1.$$
\end{definition}
\begin{remark}\label{remd}
If $v\in D^{1,2}_{\rm loc}$ and $(\Omega_n,v_n)$ localizes $v$ in
$D^{1,2}$ in the aforementioned way (cf. the previous definition),
then the Malliavin derivative $D_{y,s}v$ is defined without
ambiguity by $D_{y,s}v=D_{y,s}v_n$ on $\Omega_n$ for $n\geq 1$
(i.e., $D_{y,s}v$ is well defined by localization in the space
$D^{1,2}_{\rm loc}$), cf. \cite{N}, p. 49.

\end{remark}
\subsection{Localization of $u$ in $L^{1,2}$}
Our aim is to prove that the stochastic solution $u$ of
\eqref{sm} belongs to the space $L_{\rm loc}^{1,2}$, (observe
that $L_{\rm loc}^{1,2}\subseteq D_{\rm loc}^{1,2}$).

\begin{remark}\label{remnew1}
Note, that $L^{1,2}$ is a subset of $D^{1,2}$, consisting of more
regular random variables in $L^2(\Omega\times
[0,T]\times\mathcal{D})$, with Malliavin derivative bounded in
$L^2(\Omega\times ([0,T]\times\mathcal{D})^2)$. Hence, a
constructed localization $(\Omega_n,u_n)$ of $u$ in $L^{1,2}$ is
also a localization in $D^{1,2}$ and thus, will define well the
Malliavin derivative of $u$ through the Malliavin derivative of
$u_n$ (see Remark \ref{remd}). Moreover, the previous
construction, will establish local regularity of the solution $u$
of \eqref{sm} in the sense of Malliavin calculus.
\end{remark}

The solution $u$ of the stochastic equation \eqref{sm} is written
in integral representation as
\begin{align} \label{original}
u(x,t) =& \int_{\mathcal{D}}u_0(y)G(x,y,t) dy &&\nonumber \\
&+ \int_{0}^{t}\int_{\mathcal{D}}[\Delta G(x,y,t-s)-G(x,y,t-s)]f(u(y,s)) dy ds &&\nonumber \\
&+ \int_{0}^{t}\int_{\mathcal{D}}G(x,y,t-s)\sigma(u(y,s))W(dy,ds),
\end{align}
for,
\begin{equation} \label{KJ2211}
G(x,y,t):=\sum_{k=0}^\infty
e^{-({\lambda^2_k+\lambda_k})t}\alpha_k(x)\alpha_k(y),
\end{equation}
where $\lambda_k$ are the eigenvalues of the negative Neumann
Laplacian with Neumann b.c. posed on $\mathcal{D}$, and
$\{a_k\}_{k\in\mathbb{N}}$ a corresponding eigenfunction
orthonormal basis of $L^2(\mathcal{D})$; see \cite{AKM} for more
details on \eqref{original} and the definition of Green's
function $G$.
\subsubsection{Piece-wise approximation of the stochastic solution}

We shall construct a 'piecewise' approximation $u_n\in L^{1,2}$
of $u$.

Let $H_n:\mathbb{R}^+\rightarrow\mathbb{R}^+$ be a $\mathcal{C}^1$
cut-off function satisfying
$$|H_n|\le 1,\mbox{ and }|H^\prime_n|\le 2,$$ for any $n>0$, with
\begin{equation}
H_n(x):=
\begin{cases}
1  &     \textrm{if  $|x|< n $ }   \\
0 & \textrm{if $|x|\ge n+1 $}.
\end{cases}
\end{equation}
We set
\begin{equation} \label{DF11}
 f_n(x):=H_n(|x|)f(x),
\end{equation}
obviously $f_n$ is a $\mathcal{C}^1$ function and its derivative
is bounded, \cite{W}; this bound depends on $n$ and consists a
Lipschitz coefficient for $f_n$.

We define
$$\Omega_{n}:= \Big{\{} \omega\in\Omega:\;\sup_{t \in [0,T]}\sup_{x \in \mathcal{D}} |u(x,t;\omega)| < n
\Big{\}}.$$ Obviously, it holds that
$$\Omega_1\subseteq\Omega_2\subseteq\cdots\subseteq \Omega.$$
Let
\begin{align} \label{piecewise ff}
u_n(x,t) :=&\int_{\mathcal{D}}u_0(y)G(x,y,t) dy &&\nonumber \\
&+ \int_{0}^{t}\int_{\mathcal{D}}[\Delta G(x,y,t-s)-G(x,y,t-s)]f_n(u_n(y,s)) dy ds &&\nonumber \\
&+
\int_{0}^{t}\int_{\mathcal{D}}G(x,y,t-s)\sigma(u_n(y,s))W(dy,ds).
\end{align}
We shall prove existence and uniqueness of solution $u_n$ of
$(\ref{piecewise ff})$, and we shall establish that $u_n$ belongs
in the space $L^{1,2}$; this will yield that the solution $u$ is
in the space $L_{\rm loc}^{1,2}$.

We assume that the initial condition $u_0$ is smooth; according
to \cite{AKM}, in dimensions $d=1$, due to the stated at the
introduction assumptions for $\sigma$, in particular the
Lipschitz property and the growth of order $q<\frac{1}{3}$ (in
\cite{AKM}, $\sigma$ is just Lipschitz with sublinear growth of
order $q<\frac{1}{3}$ and not assumed also continuously
differentiable or non-degenerate), the solution $u$ of \eqref{sm}
exists and is a.s. continuous. We need a.s. continuity of $u$ in
order to establish our arguments, and this is the main reason why
our Main Result is restricted in dimensions $d=1$. More
precisely, the a.s. continuity of $u$ yields, cf. also in
\cite{CW1}
$$P(\Omega_n)\rightarrow 1\;\;\mbox{as}\;n\rightarrow\infty,$$
which is needed for the definition of the localization of $u$.

The rest of this paragraph, will be devoted to the proof of the
next, quite technical lemma, which establishes the existence of
the piece-wise approximation $u_n$, and provides a useful bound in
expectation.

\begin{lemma} \label{lemma 2.1}
The problem (\ref{piecewise ff}) has a unique solution $u_n$, in
dimensions $d=1$.

Moreover, $u_n$ satisfies for any $p\geq 2$
\begin{equation}\label{feq}
\sup_{t\in[0,T]}\mathbf{E}
(\|u_n(\cdot,t)\|_{L^\infty(\mathcal{D})}^p)<\infty.
\end{equation}
\end{lemma}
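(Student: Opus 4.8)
\textbf{Proof plan for Lemma \ref{lemma 2.1}.}

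The plan is to prove existence and uniqueness of $u_n$ by a standard Picard iteration in a suitable Banach space, and then to extract the uniform moment bound \eqref{feq} by a Gronwall-type argument carried out directly in the $L^\infty(\mathcal{D})$-norm. First, I would set up the iteration scheme $u_n^{(0)}(x,t):=\int_{\mathcal{D}}u_0(y)G(x,y,t)\,dy$ and, for $m\geq 0$,
\begin{align*}
u_n^{(m+1)}(x,t):=&\int_{\mathcal{D}}u_0(y)G(x,y,t)\,dy\\
&+\int_0^t\int_{\mathcal{D}}[\Delta G(x,y,t-s)-G(x,y,t-s)]f_n(u_n^{(m)}(y,s))\,dy\,ds\\
&+\int_0^t\int_{\mathcal{D}}G(x,y,t-s)\sigma(u_n^{(m)}(y,s))\,W(dy,ds).
\end{align*}
The key point allowing this scheme to close is that $f_n$ is globally Lipschitz (with constant depending on $n$) and bounded, by construction \eqref{DF11}, whereas the original $f$ is only locally Lipschitz; similarly $\sigma$ is globally Lipschitz by \eqref{s3}. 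I would work in the space of predictable processes with norm $\sup_{t\le T}\mathbf{E}(\|\cdot(\cdot,t)\|_{L^\infty(\mathcal{D})}^p)$, or, if that is technically awkward for the stochastic term, in a Hölder-type norm in $(x,t)$ on $\mathcal{D}\times[0,T]$; either way the relevant $L^p$-estimates on the three terms are available from \cite{AKM}, where the same equation with the un-truncated $f$ and the same $\sigma$ was handled.

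The crux of both parts is a set of kernel estimates for $G$, $\Delta G$ and the stochastic convolution against $G$. For the drift terms I would use the smoothing/integrability properties of the Green's function of the fourth-order-plus-second-order operator: in $d=1$ one has $\int_{\mathcal{D}}|G(x,y,t)|\,dy$ bounded uniformly and $\int_0^t\int_{\mathcal{D}}|\Delta G(x,y,t-s)-G(x,y,t-s)|\,dy\,ds$ finite (with the $\Delta G$ part contributing a mild, integrable singularity like $(t-s)^{-3/4}$), so that the drift contribution to the $L^\infty$-norm is controlled by $\int_0^t (t-s)^{-3/4}\,\|f_n(u_n^{(m)}(\cdot,s))\|_{L^\infty}\,ds$ and, since $f_n$ is bounded, is simply $O(1)$ — this already gives a clean a priori bound independent of $m$. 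For the stochastic term I would apply the Burkholder--Davis--Gundy inequality together with a factorization/Sobolev-embedding argument (as in \cite{AKM}, \cite{CW1}) to pass from $L^2(\mathcal{D})$ to $L^\infty(\mathcal{D})$, using that $\int_0^t\int_{\mathcal{D}}|G(x,y,t-s)|^2\,dy\,ds$ behaves like $\int_0^t (t-s)^{-1/2}\,ds<\infty$ in $d=1$, and then controlling $\|\sigma(u_n^{(m)}(\cdot,s))\|_{L^\infty}^p\le C(1+\|u_n^{(m)}(\cdot,s)\|_{L^\infty}^{pq})$ via the sublinear growth \eqref{LEq}. Since $q<\tfrac13<1$, the resulting nonlinear Gronwall inequality
$$\phi_n(t):=\sup_{r\le t}\mathbf{E}(\|u_n(\cdot,r)\|_{L^\infty}^p)\le C+C\int_0^t (t-s)^{-\theta}\big(1+\phi_n(s)^{q}\big)\,ds$$
with $\theta<1$ closes to give $\sup_{t\le T}\phi_n(t)<\infty$; this is exactly \eqref{feq}.

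For uniqueness (and convergence of the Picard scheme) I would estimate the difference of two solutions, or of consecutive iterates, in the same norm: the drift difference is controlled using the Lipschitz constant of $f_n$ (finite, depending on $n$) times $\int_0^t(t-s)^{-3/4}\|u_n-\tilde u_n\|_{L^\infty}(s)\,ds$, and the stochastic difference similarly using the Lipschitz constant \eqref{s3} of $\sigma$ and the BDG/factorization bound, giving a linear singular Gronwall inequality that forces the difference to vanish and the iterates to form a Cauchy sequence. The main obstacle I anticipate is the passage from $L^2(\mathcal{D})$-type estimates on the stochastic convolution to the stronger $L^\infty(\mathcal{D})$-estimate required by \eqref{feq}: this needs the factorization method with an exponent chosen so that the Sobolev embedding $W^{\alpha,p}(\mathcal{D})\hookrightarrow L^\infty(\mathcal{D})$ applies while the kernel singularities remain integrable, and one must check that the interplay of the $(t-s)^{-1/2}$ singularity from $|G|^2$, the BDG power $p$, and the growth exponent $q<\tfrac13$ leaves enough room — it is precisely here that the restriction $q<\tfrac13$ and $d=1$ are used, and where the argument differs from the bounded-diffusion case of \cite{CW1}.
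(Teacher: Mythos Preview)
Your plan is correct and matches the paper's approach closely: Picard iteration with the Lipschitz $f_n$, BDG for the stochastic convolution, Green's function estimates, and a Gronwall-type argument for both convergence/uniqueness and the moment bound \eqref{feq}. Two minor differences worth noting: the paper does \emph{not} use factorization or Sobolev embedding for the $L^\infty$-bound on the stochastic term, but instead takes $\sup_x$ directly and invokes the pointwise kernel estimates (1.11)--(1.13) of \cite{CW1} with $\rho=q=\infty$, $r=1$ (so the $\Delta G$ singularity is $(t-s)^{-1/2}$ rather than your $(t-s)^{-3/4}$, and the $|G|^2$ singularity is $(t-s)^{-1/4}$); and the paper obtains \eqref{feq} by telescoping the Cauchy increments $\sum_k \mathbf{E}\|u_{n,k+1}-u_{n,k}\|_{L^\infty}^p\le c\sum_k c^{2k}/k!$ rather than via a nonlinear Gronwall on the solution itself. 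Your route via factorization is cleaner in justifying the passage to $L^\infty(\mathcal{D})$, while the paper's route is more elementary once one accepts the \cite{CW1} kernel lemmas; both arrive at the same place.
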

\begin{proof}
The basic idea is the construction of a Cauchy sequence, through a
Picard iteration scheme, which converges, at a certain norm, to
the solution $u_n$ of
 (\ref{piecewise ff}).

For given $n$, we define
$$u_{n,0}(x,t):=G_tu_0(x),$$
and for any integer $k\geq 0$, we consider the following Picard
iteration scheme, which is motivated by $(\ref{piecewise ff})$,
\begin{align} \label{Picard iteration k+1}
u_{n,k+1}(x,t) =& \int_{\mathcal{D}}u_0(y)G(x,y,t) dy &&\nonumber \\
&+ \int_{0}^{t}\int_{\mathcal{D}}[\Delta G(x,y,t-s)-G(x,y,t-s)] f_n(u_{n,k}(y,s)) dy ds &&\nonumber \\
&+
\int_{0}^{t}\int_{\mathcal{D}}G(x,y,t-s)\sigma(u_{n,k}(y,s))W(dy,ds).
\end{align}


Relation $(\ref{Picard iteration k+1})$ yields for any $k \ge 1$,
\begin{equation} \label{Picard iteration combination}
\begin{split}
u_{n,k+1}(x,t)-u_{n,k}(x,t)=& \int_{0}^{t}\int_{\mathcal{D}}[\Delta G(x,y,t-s)-G(x,y,t-s)] \Big ( f_n(u_{n,k}(y,s))-f_n(u_{n,k-1}(y,s)) \Big )dyds  \\
 &+ \int_{0}^{t}\int_{\mathcal{D}}G(x,y,t-s) \Big (\sigma(u_{n,k}(y,s))-  \sigma(u_{n,k-1}(y,s))\Big
 )W(dy,ds).
\end{split}
\end{equation}
Hence, we obtain
\begin{equation*}
\begin{split}
|u_{n,k+1}&(x,t)-u_{n,k}(x,t)|
\le \int_{0}^{t}\int_{\mathcal{D}}|\Delta G(x,y,t-s)|   | f_n(u_{n,k} (y,s))-f_n(u_{n,k-1}(y,s)) | dyds \\
&+\int_{0}^{t}\int_{\mathcal{D}}|G(x,y,t-s)|   | f_n(u_{n,k}(y,s))-f_n(u_{n,k-1}(y,s)) | dyds    \\
 &+ \Big |\int_{0}^{t}\int_{\mathcal{D}} G(x,y,t-s)(\sigma(u_{n,k}(y,s))-  \sigma(u_{n,k-1}(y,s)))W(dy,ds) \Big
 |.
\end{split}
\end{equation*}
Thus, taking $p$ powers for $p\geq 2$, and then supremum for any
$x\in\mathcal{D}$ and supremum in time for the stochastic
integral, and then expectation, we get
\begin{equation*}
\begin{split}
\mathbf{E}
(\sup_{x\in\mathcal{D}}&|u_{n,k+1}(x,t)-u_{n,k}(x,t)|^p)=\mathbf{E}
(\|u_{n,k+1}(\cdot,t)-u_{n,k}(\cdot,t)\|_{L^\infty(\mathcal{D})}^p)\\
 \le&
c\mathbf{E} \bigg (\sup_{x\in\mathcal{D}}
\Big{(}\int_{0}^{t}\int_{\mathcal{D}}|\Delta G(x,y,t-s)|
  | f_n(u_{n,k}(y,s))-f_n(u_{n,k-1}(y,s)) | dyds \Big )^p \bigg )  \\
&+ c\mathbf{E} \bigg (\sup_{x\in\mathcal{D}} \Big
(\int_{0}^{t}\int_{\mathcal{D}}|G(x,y,t-s)|
   | f_n(u_{n,k}(y,s))-f_n(u_{n,k-1}(y,s)) | dyds \Big )^p  \bigg ) \\
&+c\mathbf{E} \bigg (\sup_{\tau\in[0,t]}\sup_{x\in\mathcal{D}}
\Big |\int_{0}^{\tau}\int_{\mathcal{D}} G(x,y,\tau-s)
[\sigma(u_{n,k}(y,s))- \sigma(u_{n,k-1}(y,s)) ] W(dy,ds) \Big |^p
\bigg ).
\end{split}
\end{equation*}

The function $f_n$ is Lipschitz and so,
\begin{equation*}
\begin{split}
\mathbf{E}
(\|u_{n,k+1}(\cdot,t)-&u_{n,k}(\cdot,t)\|_{L^\infty(\mathcal{D})}^p) \\
 \le&
c\mathbf{E} \bigg ( \sup_{x\in\mathcal{D}}\Big
(\int_{0}^{t}\int_{\mathcal{D}}|\Delta G(x,y,t-s)|
   | u_{n,k}(y,s)-u_{n,k-1}(y,s) | dyds \Big )^p \bigg )  \\
&+ c\mathbf{E} \bigg (\sup_{x\in\mathcal{D}} \Big
(\int_{0}^{t}\int_{\mathcal{D}}|G(x,y,t-s)|
   | u_{n,k}(y,s)-u_{n,k-1}(y,s) | dyds \Big )^p  \bigg ) \\
&+c\mathbf{E} \bigg (\sup_{\tau\in [0,t]}
\sup_{x\in\mathcal{D}}\Big |\int_{0}^{\tau}\int_{\mathcal{D}}
G(x,y,\tau-s) [\sigma(u_{n,k}(y,s))-  \sigma(u_{n,k-1}(y,s)) ]
W(dy,ds) \Big |^p \bigg ).
\end{split}
\end{equation*}
Burkholder-Davis-Gundy inequality applied to the stochastic term
of the previous inequality gives
\begin{equation}\label{d1n}
\begin{split}
\mathbf{E}
\Big{(}\|u_{n,k+1}(\cdot,t)-&u_{n,k}(\cdot,t)\|_{L^\infty(\mathcal{D})}^p\Big{)}\\
\le &c\mathbf{E} \bigg ( \sup_{x\in\mathcal{D}}\Big
(\int_{0}^{t}\int_{\mathcal{D}}|\Delta G(x,y,t-s)|
  | u_{n,k}(y,s)-u_{n,k-1}(y,s) | dyds \Big )^p \bigg )  \\
&+ c\mathbf{E} \bigg ( \sup_{x\in\mathcal{D}}\Big
(\int_{0}^{t}\int_{\mathcal{D}}|G(x,y,t-s)|
 | u_{n,k}(y,s)-u_{n,k-1}(y,s) | dyds \Big )^p  \bigg ) \\
& +c\mathbf{E} \bigg ( \sup_{\tau\in [0,t]}
\sup_{x\in\mathcal{D}}\Big (\int_{0}^{\tau}\int_{\mathcal{D}}|
G(x,y,\tau-s)|^2 |\sigma(u_{n,k}(y,s))-\sigma(
u_{n,k-1}(y,s)) |^2 dyds \Big )^{p/2} \bigg )\\
\le &c\mathbf{E} \bigg (\sup_{x\in\mathcal{D}} \Big (\int_{0}^{t}
\int_{\mathcal{D}}|\Delta G(x,y,t-s)|   | u_{n,k}(y,s)-u_{n,k-1}(y,s) | dyds \Big )^p \bigg )  \\
&+ c\mathbf{E} \bigg ( \sup_{x\in\mathcal{D}}\Big (\int_{0}^{t}
\int_{\mathcal{D}}|G(x,y,t-s)|   | u_{n,k}(y,s)-u_{n,k-1}(y,s) | dyds \Big )^p  \bigg ) \\
& +c\mathbf{E} \bigg
(\sup_{\tau\in[0,t]}\sup_{x\in\mathcal{D}}\Big
(\int_{0}^{\tau}\int_{\mathcal{D}}| G(x,y,\tau-s)|^2
|u_{n,k}(y,s)- u_{n,k-1}(y,s) |^2 dyds \Big )^{p/2} \bigg ),
\end{split}
\end{equation}
where for the last inequality we used that the diffusion
coefficient $\sigma$ is Lipschitz, uniformly for any $n$.

Thus, from \eqref{d1n}, we derived
\begin{equation} \label{le1.6}
\mathbf{E}
\Big{(}\|u_{n,k+1}(\cdot,t)-u_{n,k}(\cdot,t)\|_{L^\infty(\mathcal{D})}^p\Big{)}
\le Q_1(t)+Q_2(t)+Q_3(t),
\end{equation}
where
\begin{equation*}
\begin{split}
Q_1(t):=&c\mathbf{E} \bigg (\Big{\|}\int_{0}^{t}\int_{\mathcal{D}}
|\Delta G(\cdot,y,t-s)|   | u_{n,k}(y,s)-u_{n,k-1}(y,s) | dyds \Big \|_{L^\infty(\mathcal{D})}^p \bigg ), \\
Q_2(t):=&c\mathbf{E} \bigg (\Big \|\int_{0}^{t}
\int_{\mathcal{D}}|G(\cdot,y,t-s)|   | u_{n,k}(y,s)-u_{n,k-1}(y,s) | dyds \Big \|_{L^\infty(\mathcal{D})}^p  \bigg ), \\
Q_3(t):=&c\mathbf{E} \bigg ( \sup_{\tau\in[0,t]}\Big
\|\int_{0}^{\tau}\int_{\mathcal{D}}| G(\cdot,y,\tau-s)|^2
|u_{n,k}(y,s)- u_{n,k-1}(y,s) |^2 dyds \Big
\|_{L^\infty(\mathcal{D})}^{p/2} \bigg ).
\end{split}
\end{equation*}

In the sequel, we shall estimate the terms involving the Green's
function $G$ by using Lemma 1.6 of \cite{CW1} for
$\rho=q=\infty$, $r=1$ (which holds true when $H$ and $\upsilon$
are replaced by their absolute values, cf. the proof of lemma
presented in \cite{CW1}).

For estimating the term $Q_1(t)$, we choose the inequality (1.12)
of \cite{CW1}, p. 781, for $$H(x,y,t-s):=|\Delta
G(x,y,t-s)|,\;\;\;\upsilon( y ,s):=|
u_{n,k}(y,s)-u_{n,k-1}(y,s)|,\;\;\;\rho=q=\infty,\;\;\;r=1.$$

Thus, we have for $p>2$
\begin{equation}\label{da1}
\begin{split}
Q_1(t) \le &c\mathbf{E} \bigg ( \Big |\int_{0}^{t}
\frac{1}{(t-s)^{(d+2)/4-d/4}}
 \| u_{n,k}(\cdot,s)-u_{n,k-1}(\cdot,s) \|_{L^\infty(\mathcal{D})} ds \Big |^p \bigg ) \\
\le &c\mathbf{E} \bigg ( \Big |\int_{0}^{t}
\frac{1}{(t-s)^{\frac{1}{2}}}   \|
u_{n,k}(\cdot,s)-u_{n,k-1}(\cdot,s) \|_{L^\infty(\mathcal{D})} ds
\Big |^p \bigg )\\
\le &c\mathbf{E} \bigg ( \Big (\int_{0}^{t}
\frac{1}{(t-s)^{q\frac{1}{2}}}ds\Big )^{p/q}\Big (\int_0^t \|
u_{n,k}(\cdot,s)-u_{n,k-1}(\cdot,s) \|^p_{L^\infty(\mathcal{D})}
ds \Big )^{p/p} \bigg )\\
=&c\mathbf{E} \bigg ( \Big (\int_{0}^{t}
\frac{1}{(t-s)^{q\frac{1}{2}}}ds\Big )^{p/q}\int_0^t \|
u_{n,k}(\cdot,s)-u_{n,k-1}(\cdot,s) \|^p_{L^\infty(\mathcal{D})}
ds\bigg )\\
\le&c\mathbf{E} \bigg (\int_0^t \|
u_{n,k}(\cdot,s)-u_{n,k-1}(\cdot,s) \|^p_{L^\infty(\mathcal{D})}
ds \bigg ),
\end{split}
\end{equation}
where we used H\"older inequality for $q=p/(p-1)$, i.e.,
$1/p+1/q=1/p+(p-1)/p=1$, and the fact that $0<\frac{q}{2}<1$ or
equivalently $0<\frac{p}{p-1}<2$, which is true for any $p>2$, and
thus
$$\Big (\int_{0}^{t} \frac{1}{(t-s)^{q\frac{1}{2}}}ds\Big )^{p/q}<c.$$


For the term $Q_2(t)$ we choose the inequality (1.11) of
\cite{CW1}, p. 781, for $$H(x,y,t-s)= |G(x,y,t-s)|,\;\;\;
\upsilon( y,s)=|
u_{n,k}(y,s)-u_{n,k-1}(y,s)|,\;\;\;\rho=q=\infty,\;\;\;r=1.$$ Then
we get
\begin{equation}\label{da2}
\begin{split}
Q_2(t) \le &c\mathbf{E} \bigg (\Big |\int_{0}^{t}
\frac{1}{(t-s)^{(\frac{d}{4})(1-1)}}
 \| u_{n,k}(\cdot,s)-u_{n,k-1}(\cdot,s) \|_{L^\infty(\mathcal{D})} ds \Big |^p \bigg ) \\
= &c\mathbf{E} \bigg (\Big |\int_{0}^{t}   \|
u_{n,k}(\cdot,s)-u_{n,k-1}(\cdot,s) \|_{L^\infty(\mathcal{D})} ds
\Big |^p \bigg )\\
\le &c \mathbf{E} \bigg (\Big (\int_{0}^{t}  1^qds\Big )^{p/q}\Big
(\int_0^t \| u_{n,k}(\cdot,s)-u_{n,k-1}(\cdot,s)
\|^p_{L^\infty(\mathcal{D})} ds \Big )^{p/p} \bigg )\\
\le &c\mathbf{E} \bigg (\int_0^t \|
u_{n,k}(\cdot,s)-u_{n,k-1}(\cdot,s) \|^p_{L^\infty(\mathcal{D})}
ds \bigg ),
\end{split}
\end{equation}
where we used H\"older inequality for $q=p/(p-1)$.


For the term $Q_3(t)$ we choose the inequality (1.13) of
\cite{CW1}, p. 781, for $$H(x,y,t-s)= G^2(x,y,t-s),\;\;\;
\upsilon(y
,s)=|u_{n,k}(y,s)-u_{n,k-1}(y,s)|^2,\;\;\;\rho=q=\infty,\;\;\;r=1,$$
and we obtain
\begin{equation}\label{da3}
\begin{split}
Q_3(t) \le &c\mathbf{E} \bigg (\sup_{\tau\in[0,t]} \Big
|\int_{0}^{\tau} \frac{1}{(\tau-s)^{\frac{d}{2}-\frac{d}{4}}}
  \| |u_{n,k}(\cdot,s)-u_{n,k-1}(\cdot,s)|^2 \|_{L^\infty(\mathcal{D})} ds \Big |^{p/2} \bigg ) \\
= &c\mathbf{E} \bigg (\sup_{\tau\in[0,t]} \Big |\int_{0}^{\tau}
\frac{1}{(\tau-s)^{\frac{d}{4}}}   \|
u_{n,k}(\cdot,s)-u_{n,k-1}(\cdot,s) \|^2_{L^\infty(\mathcal{D})}
ds \Big |^{p/2} \bigg )\\
\le &c\mathbf{E} \bigg ( \sup_{\tau\in[0,t]}\Big[\Big
(\int_{0}^{\tau}
\frac{1}{(\tau-s)^{q\frac{d}{4}}}ds\Big)^{\frac{p}{2q}}
\Big[\int_0^\tau\| u_{n,k}(\cdot,s)-u_{n,k-1}(\cdot,s)
\|^{2p/2}_{L^\infty(\mathcal{D})} ds \Big ]^{\frac{p/2}{p/2}}\Big]
\bigg )\\
\le &c\mathbf{E} \bigg ( \sup_{\tau\in[0,t]}\Big[\Big
(\int_{0}^{\tau}
\frac{1}{(\tau-s)^{q\frac{d}{4}}}ds\Big)^{\frac{p}{2q}}\Big]
\sup_{\tau\in[0,t]}\Big[\int_0^\tau\|
u_{n,k}(\cdot,s)-u_{n,k-1}(\cdot,s) \|^p_{L^\infty(\mathcal{D})}
ds \Big ]\bigg )\\
\le &c\mathbf{E} \bigg (\int_0^t\|
u_{n,k}(\cdot,s)-u_{n,k-1}(\cdot,s) \|^p_{L^\infty(\mathcal{D})}
ds \bigg ),
\end{split}
\end{equation}
where we used H\"older inequality for $1/q+1/(p/2)=(p-2)/p+2/p=1$,
i.e., for $q=p/(p-2)$ which gives
$0<q\frac{d}{4}=\frac{p}{p-2}\frac{d}{4}<1$ true for
$p>\frac{8}{4-d}$ and thus,
$$\Big(\int_{0}^{\tau}\frac{1}{(\tau-s)^{q\frac{d}{4}}}ds\Big)^{\frac{p}{2q}}<\infty.$$

Replacing \eqref{da1}, \eqref{da2} and \eqref{da3} to
\eqref{le1.6}, we obtain for any $p>\max\{2,8/(4-d)\}$ and any
integer $k\geq 1$
\begin{equation}\label{d55}
\begin{split}
\mathbf{E}
(\|u_{n,k+1}(\cdot,t)-u_{n,k}(\cdot,t)\|_{L^\infty(\mathcal{D})}^p)
&\le c\mathbf{E} \bigg (\int_{0}^{t} \|
u_{n,k}(\cdot,s)-u_{n,k-1}(\cdot,s)
\|^p_{L^\infty(\mathcal{D})}ds \bigg )\\
&\le c\int_{0}^{t}\mathbf{E}( \|
u_{n,k}(\cdot,s_k)-u_{n,k-1}(\cdot,s_k)
\|^p_{L^\infty(\mathcal{D})})ds_k,
\end{split}
\end{equation}
where we used Fubini's Theorem.

Inequality \eqref{d55} applied for the term $\mathbf{E} (\|
u_{n,k}(\cdot,s_k)-u_{n,k-1}(\cdot,s_k)
\|^p_{L^\infty(\mathcal{D})})$ gives
\begin{equation*}
\begin{split}
\mathbf{E}
(\|u_{n,k+1}(\cdot,t)-u_{n,k}(\cdot,t)\|_{L^\infty(\mathcal{D})}^p)
\le c^2\int_{0}^{t}\int_0^{s_k}\mathbf{E}( \|
u_{n,k-1}(\cdot,s_{k-1})-u_{n,k-2}(\cdot,s_{k-1})
\|^p_{L^\infty(\mathcal{D})})ds_{k-1}ds_k,
\end{split}
\end{equation*}
i.e., we get
\begin{equation}\label{da77}
\begin{split}
\mathbf{E}
(\|u_{n,k+1}&(\cdot,t)-u_{n,k}(\cdot,t)\|_{L^\infty(\mathcal{D})}^p)
\le c^2\int_{0}^{t}\int_0^{s_k}\mathbf{E}( \|
u_{n,k-1}(\cdot,s_{k-1})-u_{n,k-2}(\cdot,s_{k-1})
\|^p_{L^\infty(\mathcal{D})})ds_{k-1}ds_k\\
\le& c^3\int_{0}^{t}\int_0^{s_k}\int_0^{s_{k-1}}\mathbf{E} (\|
u_{n,k-2}(\cdot,s_{k-2})-u_{n,k-3}(\cdot,s_{k-2})
\|^p_{L^\infty(\mathcal{D})})ds_{k-2}ds_{k-1}ds_k
\le\cdots\\
\le&c^{k}\int_{0}^{t}\int_0^{s_k}\int_0^{s_{k-1}}\cdots\int_0^{s_2}\mathbf{E}
(\| u_{n,1}(\cdot,s_{1})-u_{n,0}(\cdot,s_{1})
\|^p_{L^\infty(\mathcal{D})})d_{s_1}\cdots ds_{k-2}ds_{k-1}ds_k
\\
\le
&c^{k}\int_{0}^{t}\int_0^{s_k}\int_0^{s_{k-1}}\cdots\int_0^{s_2}1d_{s_1}\cdots
ds_{k-2}ds_{k-1}ds_k \sup_{t\in[0,T]}\mathbf{E} (\|
u_{n,1}(\cdot,t)-u_{n,0}(\cdot,t) \|^p_{L^\infty(\mathcal{D})})\le
c\frac{c^{2k}}{k!},
\end{split}
\end{equation}
for any $t\in[0,T]$, where we applied the next calculation
\begin{equation*}
\begin{split}
\int_{0}^{t}&\int_0^{s_k}\int_0^{s_{k-1}}\cdots\int_0^{s_2}1d_{s_1}\cdots
ds_{k-2}ds_{k-1}ds_k=\int_{0}^{t}\int_0^{s_k}\int_0^{s_{k-1}}\cdots\int_0^{s_3}s_2d_{s_2}\cdots
ds_{k-2}ds_{k-1}ds_k\\
=&\int_{0}^{t}\int_0^{s_k}\int_0^{s_{k-1}}\cdots\int_0^{s_4}\frac{s_3^2}{2}d_{s_3}\cdots
ds_{k-2}ds_{k-1}ds_k=\int_{0}^{t}\int_0^{s_k}\int_0^{s_{k-1}}\cdots\int_0^{s_5}\frac{s_4^3}{2\cdot
3 }d_{s_4}\cdots
ds_{k-2}ds_{k-1}ds_k\\
=&\cdots=\mathcal{O}\Big(\frac{c^k}{k!}\Big ).
\end{split}
\end{equation*}
We also used the fact that for the first step ($k:=0$), we have
easily
\begin{equation}\label{n0}
\sup_{t\in[0,T]}\mathbf{E} (\| u_{n,1}(\cdot,t)-u_{n,0}(\cdot,t)
\|^p_{L^\infty(\mathcal{D})})\leq c\sup_{t\in[0,T]}\mathbf{E} (\|
u_{n,1}(\cdot,t)\|^p_{L^\infty(\mathcal{D})})+c\sup_{t\in[0,T]}\mathbf{E}
(\|u_{n,0}(\cdot,t) \|^p_{L^\infty(\mathcal{D})})<\infty,
\end{equation}
 since
$u_{n,0}$ is deterministic and $u_{n,1}$ is given by the Picard
scheme involving $f_n(u_{n,0})$ and $\sigma(u_{n,0})$ at the
right-hand side, for $f_n$ and $\sigma$ bounded since Lipschiz. In
details, by Picard scheme, we have
\begin{equation*}
\begin{split}
|u_{n,1}(x,t)|
\le &\int_{0}^{t}|G(x,y,t)|   |u_{0} (y)| dy+\int_{0}^{t}\int_{\mathcal{D}}|\Delta G(x,y,t-s)|
  | f_n(u_{n,0} (y,s))| dyds \\
&+\int_{0}^{t}\int_{\mathcal{D}}|G(x,y,t-s)|   | f_n(u_{n,0}(y,s))| dyds    \\
 &+ \Big |\int_{0}^{t}\int_{\mathcal{D}} G(x,y,t-s)(\sigma(u_{n,0}(y,s))W(dy,ds) \Big
 |.
\end{split}
\end{equation*}
Thus, taking $p$ powers then supremum on $x\in\mathcal{D}$ and
then expectation, exactly as before, using the Green's function
estimates, Burkholder-Davis-Gunty inequality and then H\"older's
inequality, we arrive at
\begin{equation*}
\begin{split}
\sup_{t\in[0,T]}\mathbf{E}(\|u_{n,1}(\cdot,t)\|_{L^\infty(\mathcal{D})}^p)
\le
&\sup_{t\in[0,T]}\mathbf{E}\Big(\Big{\|}\int_{0}^{t}|G(\cdot,y,t)|
|u_{0} (y)| dy\Big{\|}^p_{L^\infty(\mathcal{D})}\Big
)\\
&+c\mathbf{E}\bigg (\int_{0}^{t}1ds\bigg )\le c+c<\infty.
\end{split}
\end{equation*}
So, \eqref{n0} is valid and indeed \eqref{da77} holds true.

Taking now supremum in $t$ at \eqref{da77} we obtain
$$\sup_{t\in[0,T]}\mathbf{E}
(\|u_{n,k+1}(\cdot,t)-u_{n,k}(\cdot,t)\|_{L^\infty(\mathcal{D})}^p)\le
c\frac{c^{2k}}{k!},$$ and by summation, we get
\begin{equation}\label{da78}
\begin{split}
\sum_{k=0}^\infty\sup_{t\in[0,T]}\mathbf{E}
(\|u_{n,k+1}(\cdot,t)-u_{n,k}(\cdot,t)\|_{L^\infty(\mathcal{D})}^p)
\le c\sum_{k=0}^\infty\frac{c^{2k}}{k!}=c\exp(c^2)<\infty,
\end{split}
\end{equation}
for any $p>\max\{2,\frac{8}{4-d}\}=\frac{8}{4-d}$.

Therefore, it follows that, for $n$ fixed, the limit
$\displaystyle{\lim_{k\rightarrow\infty}}u_{n,k}$, in the
$L^p(\Omega)$ norm, exists for any
$(x,t)\in\mathcal{D}\times[0,T]$. Indeed, we have for any
$(x,t)\in\mathcal{D}\times[0,T]$
\begin{equation}\label{eqc}
\begin{split}\mathbf{E} (|u_{n,k+1}(x,t)-u_{n,k}(x,t)|^p)\leq &\mathbf{E}
(\|u_{n,k+1}(\cdot,t)-u_{n,k}(\cdot,t)\|_{L^\infty(\mathcal{D})}^p)\\
\leq& \sup_{t\in[0,T]}\mathbf{E}
(\|u_{n,k+1}(\cdot,t)-u_{n,k}(\cdot,t)\|_{L^\infty(\mathcal{D})}^p)\rightarrow
0\;\;{\rm as}\;\;k\rightarrow\infty.
\end{split}
\end{equation}
So, for $n$ fixed, the sequence $u_{n,k}$ is Cauchy in
$L^p(\Omega)$, and convergent as $k\rightarrow\infty$ to some
$u_n$ in this norm, i.e.,
$$\exists\;u_n:\;\displaystyle{\lim_{k\rightarrow\infty}}
\mathbf{E}\Big{(}|u_{n,k}(\cdot,t)-u_n(\cdot,t)|^p\Big{)}=0.$$

Moreover, we observe that $u_{n,k}$, for $n$ fixed, is also Cauchy
in the norm $L^p(\infty,\Omega)$ defined by
$$\|v(\cdot,t)\|_{L^p(\infty,\Omega)}:=\Big{(}\mathbf{E}
(\|v(\cdot,t)\|_{L^\infty(\mathcal{D})}^p)\Big{)}^{1/p},$$ and
convergent in this norm, i.e.,
$$\exists\;\tilde{u_n}:\;\displaystyle{\lim_{k\rightarrow\infty}}
\mathbf{E}\Big{(}\|u_{n,k}(\cdot,t)-\tilde{u_n}(\cdot,t)\|_{L^\infty(\mathcal{D})}^p\Big{)}=0.$$
Obviously, since
$$\|u_{n,k}(\cdot,t)-\tilde{u_n}(\cdot,t)\|_{L^p(\Omega)}\leq
\|u_{n,k}(\cdot,t)-\tilde{u_n}(\cdot,t)\|_{L^p(\infty,\Omega)},$$
from uniqueness of limits, we have $u_n=\tilde{u_n}$, and thus
$$\mathbf{E}
(\|u_n(\cdot,t)-u_{n,k}(\cdot,t)\|_{L^\infty(\mathcal{D})}^p)\rightarrow
0,\;\;{\rm as}\;k\rightarrow\infty,$$ and so
\begin{equation}\label{eq23}
\mathbf{E}
(\|u_n(\cdot,t)-u_{n,k}(\cdot,t)\|_{L^\infty(\mathcal{D})}^p)<\infty,
\end{equation}
for any $k$.

We then have, using \eqref{da78} and \eqref{eq23}, for any $t$
\begin{equation*}
\begin{split}
\mathbf{E} (\|u_n(\cdot,t)\|_{L^\infty(\mathcal{D})}^p)\leq &
\mathbf{E}
(\|u_n(\cdot,t)-u_{n,k}(\cdot,t)\|_{L^\infty(\mathcal{D})}^p)+c\sum_{j=k}^\infty\mathbf{E}
(\|u_{n,j+1}(\cdot,t)-u_{n,j}(\cdot,t)\|_{L^\infty(\mathcal{D})}^p)\\
\leq& \mathbf{E}
(\|u_n(\cdot,t)-u_{n,k}(\cdot,t)\|_{L^\infty(\mathcal{D})}^p)+c\sum_{j=k}^\infty\sup_{t\in[0,T]}\mathbf{E}
(\|u_{n,j+1}(\cdot,t)-u_{n,j}(\cdot,t)\|_{L^\infty(\mathcal{D})}^p)\\
\leq &\mathbf{E}
(\|u_n(\cdot,t)-u_{n,k}(\cdot,t)\|_{L^\infty(\mathcal{D})}^p)+c\leq
c.
\end{split}
\end{equation*}
Hence, by taking supremum over all $t\in[0,T]$, we obtain for any
$p>\max\{2,8/(4-d)\}=\frac{8}{3}$, in dimensions $d=1$,
\begin{equation}\label{supre}
\sup_{t\in[0,T]}\mathbf{E}
(\|u_n(\cdot,t)\|_{L^\infty(\mathcal{D})}^p)<\infty.
\end{equation}
Note that for power $\hat{p}$ such that $2\leq \hat{p}\leq
\frac{8}{3}$, we use H\"older's inequality for the expectation as
follows. Observe that $2\hat{p}>\frac{8}{3}$, and take
\begin{equation*}
\mathbf{E}
(\|u_n(\cdot,t)\|_{L^\infty(\mathcal{D})}^{\hat{p}})\leq c
\mathbf{E}
(\|u_n(\cdot,t)\|_{L^\infty(\mathcal{D})}^{2\hat{p}})^{1/2}
\end{equation*}
Thus, we get
\begin{equation*}
\sup_{t\in[0,T]}\mathbf{E}
(\|u_n(\cdot,t)\|_{L^\infty(\mathcal{D})}^{\hat{p}})\leq c
\sup_{t\in[0,T]}\mathbf{E}
(\|u_n(\cdot,t)\|_{L^\infty(\mathcal{D})}^{2\hat{p}})^{1/2}<\infty,
\end{equation*}
by \eqref{supre}, since $2\hat{p}>\frac{8}{3}$. So, we have
finally for any $p\geq 2$, in dimensions $d=1$
\begin{equation}\label{supref}
\sup_{t\in[0,T]}\mathbf{E}
(\|u_n(\cdot,t)\|_{L^\infty(\mathcal{D})}^p)<\infty.
\end{equation}

 Through the scheme \eqref{Picard iteration k+1}, by a
standard argument, where we take limits in the $L^p(\Omega)$
norm, and use the fact that $f_n$ and $\sigma$ are uniformly
continuous since Lipschitz, we have
\begin{align} \label{picl}
\lim_{k\rightarrow\infty}u_{n,k}(x,t) =& \int_{\mathcal{D}}u_0(y)G(x,y,t) dy &&\nonumber \\
&+ \int_{0}^{t}\int_{\mathcal{D}}[\Delta G(x,y,t-s)-G(x,y,t-s)] f_n(\lim_{k\rightarrow\infty}u_{n,k}
(y,s)) dy ds &&\nonumber \\
&+
\int_{0}^{t}\int_{\mathcal{D}}G(x,y,t-s)\sigma(\lim_{k\rightarrow\infty}u_{n,k}(y,s))W(dy,ds).
\end{align}
Note that for the stochastic term, since
$$\|u_n(\cdot,t)-u_{n,k}(\cdot,t)\|_{L^p(\Omega)}\rightarrow
0,\;\;{\rm as}\;k\rightarrow\infty,$$ we can easily prove that
$$\Big{\|}\int_{0}^{t}\int_{\mathcal{D}}G(x,y,t-s)[\sigma(u_n(y,s))-\sigma(u_{n,k}(y,s))]W(dy,ds)
\Big{\|}_{L^p(\Omega)}\rightarrow 0,\;\;{\rm
as}\;k\rightarrow\infty,$$ by using Burkholder-Davis-Gundy
inequality as before, the Lipschitz property (or uniform
continuity of $\sigma$), H\"older inequality and the estimates of
$G$.

So, since $u_n=\displaystyle{\lim_{k\rightarrow\infty}}u_{n,k}$,
in the $L^p(\Omega)$ norm, we derive that $u_n$ satisfies the
stochastic pde \eqref{piecewise ff}; as we shall prove in the
sequel, \eqref{piecewise ff} is uniquely solvable (due to the
fact that $f_n$, $\sigma$ are Lipschitz in $\mathbb{R}$).
Moreover, $u=u_n$ on $\Omega_n$ a.s. (see also in \cite{CW1}, for
the analogous argument for the stochastic Cahn-Hilliard case,
where the same cut-off function was used).


We proceed by establishing uniqueness of solution for the problem
$(\ref{piecewise ff})$.

Let us suppose that $\omega_n$ is a solution of $(\ref{piecewise
ff})$. Then since $u_n$ is a solution also, by using
(\ref{piecewise ff}) for $\omega_n$ and $u_n$ respectively, and
subtracting, we get
\begin{equation*}
\begin{split}
u_n(x,t)-\omega_n(x,t)
= &\int_{0}^{t}\int_{\mathcal{D}}[\Delta G(x,y,t-s)-G(x,y,t-s)] \Big (f_n(u_n(y,s)) -f_n(\omega_n(y,s)) \Big ) dy ds \\
&+ \int_{0}^{t}\int_{\mathcal{D}}G(x,y,t-s) \Big
(\sigma(u_n(y,s))  -\sigma(\omega_n(y,s)) \Big )W(dy,ds).
\end{split}
\end{equation*}
Hence, we obtain
\begin{equation*}
\begin{split}
|u_n(x,t)-\omega_n(x,t)|
\le  &\int_{0}^{t}\int_{\mathcal{D}}|\Delta G(x,y,t-s)|   |f_n(u_n(y,s))-f_n(u_n(y,s)) | dyds   \\
&+\int_{0}^{t}\int_{\mathcal{D}}|G(x,y,t-s)|   | f_n(u_n(y,s))-f_n(\omega_n(y,s)) | dyds  \\
 &+ \Big |\int_{0}^{t}\int_{\mathcal{D}} G(x,y,t-s) (\sigma(u_n(y,s))-  \sigma(\omega_n(y,s)) )W(dy,ds) \Big
 |.
\end{split}
\end{equation*}
We take $p$ powers for $p\geq 2$, and proceed as we did for
deriving \eqref{d1n}, i.e., we take supremum in space, expectations
at both sides, use that $f_n$ and $\sigma$ are Lipschitz, and
apply the Burkholder-Davis-Gundy inequality to the stochastic
term. This yields
\begin{equation}\label{d1nun}
\begin{split}
\mathbf{E}
\Big{(}\|u_{n}(\cdot,t)-&\omega_{n}(\cdot,t)\|_{L^\infty(\mathcal{D})}^p\Big{)}\\
\le &c\mathbf{E} \bigg (\sup_{x\in\mathcal{D}} \Big (\int_{0}^{t}
\int_{\mathcal{D}}|\Delta G(x,y,t-s)|   | u_{n}(y,s)-\omega_{n}(y,s) | dyds \Big )^p \bigg )  \\
&+ c\mathbf{E} \bigg ( \sup_{x\in\mathcal{D}}\Big (\int_{0}^{t}
\int_{\mathcal{D}}|G(x,y,t-s)|   | u_{n}(y,s)-\omega_{n}(y,s) | dyds \Big )^p  \bigg ) \\
& +c\mathbf{E} \bigg
(\sup_{\tau\in[0,t]}\sup_{x\in\mathcal{D}}\Big
(\int_{0}^{\tau}\int_{\mathcal{D}}| G(x,y,\tau-s)|^2 |u_{n}(y,s)-
\omega_{n}(y,s) |^2 dyds \Big )^{p/2} \bigg ).
\end{split}
\end{equation}
Observe that the previous inequality is the same as \eqref{d1n},
where the differences $u_{n,k+1}-u_{n,k}$, $u_{n,k}-u_{n,k-1}$ are
replaced by $u_n-\omega_n$. Thus, a direct result is the
analogous of \eqref{d55}, i.e., for any
$p>\max\{2,8/(4-d)\}=\frac{8}{3}$
\begin{equation}\label{d55un}
\begin{split}
\mathbf{E}
(\|u_{n}(\cdot,t)-\omega_{n}(\cdot,t)\|_{L^\infty(\mathcal{D})}^p)
&\le c\mathbf{E} \bigg (\int_{0}^{t} \|
u_{n}(\cdot,s)-\omega_{n}(\cdot,s)
\|^p_{L^\infty(\mathcal{D})}ds \bigg )\\
&\le c\bigg (\int_{0}^{t}\mathbf{E}( \|
u_{n}(\cdot,s)-\omega_{n}(\cdot,s) \|^p_{L^\infty(\mathcal{D})})ds
\bigg ),
\end{split}
\end{equation}
where again we used Fubini's Theorem.

Hence, by applying Gronwall's Lemma to the previous inequality for
the term $\mathbf{E}( \| u_{n}(\cdot,t)-\omega_{n}(\cdot,t)
\|^p_{L^\infty(\mathcal{D})})$, we obtain
$$\mathbf{E} (\|
u_{n}(\cdot,t)-\omega_{n}(\cdot,t)
\|^p_{L^\infty(\mathcal{D})})\leq 0,$$ for any $t\in[0,T]$. So
for any $t$ in $[0,T]$,
$$\mathbf{E} (\|
u_{n}(\cdot,t)-\omega_{n}(\cdot,t) \|^p_{L^\infty(\mathcal{D})})=
0.$$ This yields that $u_n(x,t)=\omega_n(x,t)$ almost surely in
$\Omega$ and in $\Omega_n$ (since $\Omega_n\subset\Omega$ and
thus $\|v\|_{L^p(\Omega_n)}\leq \|v\|_{L^p(\Omega)}$), for any
$t$, $x$, i.e., for $\hat{\Omega}:=\Omega,\;{\rm or}\;\Omega_n$
$$P\Big{(}w\in\hat{\Omega}:\;u_n(x,t;w)=
\omega_n(x,t;w)\Big{)}=1,\;\;{\rm for\;any}\;\;t\in[0,T],\;\;{\rm
and\; any}\;\;x\in\mathcal{D},$$
 and so by definition $u_n$, $\omega_n$ are
equivalent in $\Omega$ and in $\Omega_n$.

We shall use now the fact that when two processes are equivalent
in a set and a.s. continuous in the same set, then they are
indistinguishable in this set.

The solution $u$ of the stochastic Cahn-Hilliard/Allen-Cahn
equation \eqref{sm} is almost surely continuous in space and
time, in dimensions $d=1$, cf. \cite{AKM}, and the approximations
$u_n,\;\omega_n$ of $u$ satisfy the equation \eqref{sm} a.s. in
$\Omega_n$ (since $f_n(u_n)=f(u_n)$ and
$f_n(\omega_n)=f(\omega_n)$ in $\Omega_n$ a.s.). So, the
equivalent processes $u_n,\;\omega_n$ are almost surely
continuous in $\Omega_n$ also and thus indistinguishable in
$\Omega_n$ (having the same paths), i.e.,
\begin{equation}\label{i1}
P\Big{(}w\in\Omega_n:\;u_n(x,t;w)=\omega_n(x,t;w),\;\mbox{for any
}(x,t)\in\mathcal{D}\times[0,T]\Big{)}=1.
\end{equation}
Since $u_n$, $\omega_n$ are indistinguishable on $\Omega_n$ then
we have uniqueness of solution of (\ref{piecewise ff}) with
uniquely defined paths a.s. on $\Omega_n$.

Thus, $u_n$ is well defined by (\ref{piecewise ff}), and suitable
for localizing $u$.
\end{proof}

\subsubsection{The Malliavin derivative of $u_n$}

We proceed by proving that the derivative of the approximation
$u_n$ in the Malliavin sense, is well defined as the solution of
an spde. In addition, we establish the regularity of $u_n$ in
$D^{1,2}$ and $L^{1,2}$; this is accomplished at the next
proposition.

\begin{prop} \label{LL234}
Let $u_n(x,t)$ be the solution of $(\ref{piecewise ff})$, then:
\begin{enumerate}
\item
$u_n$ belongs to the space $D^{1,2}$.
\item
The Malliavin derivative of $u_n$ satisfies for any $s\leq t$,
uniquely, the spde of the form
\begin{equation} \label{MDE}
\begin{split}
 D_{y,s}u_{n}(x,t):=D_{y,s}(u_{n}(x,t))=&\int_{s}^{t}\int_{\mathcal{D}}[\Delta
G(x,z,t-\tau)-G(x,z,t-\tau)]
\tilde{\mathcal{G}_2}(n)(z,\tau)D_{y,s}(u_{n}(z,\tau)) dz d\tau\\
&+G(x,y,t-s)\sigma(u_{n}(y,s))\\
&+\int_{s}^{t}\int_{\mathcal{D}}G(x,z,t-\tau)\tilde{\mathcal{G}_1}(n)(z,\tau)D_{y,s}(u_{n}(z,\tau))W(dz,d\tau),
\end{split}
\end{equation}
while
$$D_{y,s}u_n(x,t)=0\;\;\mbox{ for any }s>t.$$

Here, $\tilde{\mathcal{G}_1}(n)(z,\tau)$,
$\tilde{\mathcal{G}_2}(n)(z,\tau)$ are bounded, and satisfy
$$D_{y,s} (\sigma(u_n(z,\tau))) = \tilde{\mathcal{G}_1}(n)(z,\tau)D_{y,s}(u_n(z,\tau)),$$
$$D_{y,s} (f_n(u_n(z,\tau))) = \tilde{\mathcal{G}_2}(n)(z,\tau)D_{y,s}(u_n(z,\tau)).$$
\item
$u_n$ belongs to $L^{1,2}$.
\end{enumerate}
\end{prop}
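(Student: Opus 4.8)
The plan is to prove the three assertions simultaneously via the Picard scheme $u_{n,k}$ from Lemma \ref{lemma 2.1}, establishing by induction on $k$ that each $u_{n,k}(x,t)\in D^{1,2}$ with Malliavin derivative satisfying a recursive linear spde, and then passing to the limit $k\to\infty$. First I would recall from the theory in \cite{N} that since $\sigma$ and $f_n$ are $\mathcal{C}^1$ with bounded derivatives (the bound on $f_n'$ depending on $n$, cf. \eqref{DF11}), the chain rule for the Malliavin derivative applies: if $v\in D^{1,2}$ then $\sigma(v),\,f_n(v)\in D^{1,2}$ and
\begin{equation*}
D_{y,s}(\sigma(v(z,\tau)))=\sigma'(v(z,\tau))\,D_{y,s}(v(z,\tau)),\qquad D_{y,s}(f_n(v(z,\tau)))=f_n'(v(z,\tau))\,D_{y,s}(v(z,\tau)),
\end{equation*}
which identifies $\tilde{\mathcal{G}_1}(n)=\sigma'(u_n)$ and $\tilde{\mathcal{G}_2}(n)=f_n'(u_n)$ as the required bounded coefficients, with $|\tilde{\mathcal{G}_1}(n)|\le K$ by \eqref{s4} and $|\tilde{\mathcal{G}_2}(n)|\le C_n$. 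Applying the derivative operator $D_{y,s}$ term-by-term to the Picard recursion \eqref{Picard iteration k+1}: the deterministic initial term vanishes; the drift term contributes $\int_s^t\int_{\mathcal{D}}[\Delta G-G]\,\tilde{\mathcal{G}_2}(n)(z,\tau)D_{y,s}u_{n,k}(z,\tau)\,dzd\tau$; and for the stochastic integral one uses the commutation relation between $D_{y,s}$ and the Walsh stochastic integral (\cite{N}, Prop. 1.3.8 adapted, or as in \cite{CW1}), producing both the "free" term $G(x,y,t-s)\sigma(u_{n,k}(y,s))$ coming from differentiating the integrand pointwise at $(y,s)$, and the integral term $\int_s^t\int_{\mathcal{D}}G(x,z,t-\tau)\tilde{\mathcal{G}_1}(n)(z,\tau)D_{y,s}u_{n,k}(z,\tau)\,W(dz,d\tau)$. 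For $s>t$ the derivative vanishes by adaptedness (the integrand at times $\le t$ does not depend on the noise after $t$).

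Next I would close the induction by obtaining uniform-in-$k$ $L^2$ estimates on $D_{y,s}u_{n,k}$. Set $\phi_{n,k}(y,s;t):=\mathbf{E}\big(\|D_{y,s}u_{n,k}(\cdot,t)\|_{L^\infty(\mathcal{D})}^2\big)$ (or its $L^2(\mathcal{D})$ analogue). Taking $L^\infty(\mathcal{D})$-norms in $x$, raising to the second power, taking expectations, applying Burkholder–Davis–Gundy to the stochastic term and the Green's-function bounds of Lemma 1.6 of \cite{CW1} exactly as in \eqref{da1}–\eqref{da3} (with $p=2$, $d=1$), and using the boundedness of $\tilde{\mathcal{G}_1}(n),\tilde{\mathcal{G}_2}(n)$, one gets
\begin{equation*}
\phi_{n,k+1}(y,s;t)\le c_n\,|G(x,y,t-s)|^2\,\mathbf{E}(|\sigma(u_{n,k}(y,s))|^2)+c_n\int_s^t\frac{1}{(t-\tau)^{1/2}}\,\phi_{n,k}(y,s;\tau)\,d\tau,
\end{equation*}
where the first term is integrable in $(y,s)$ over $[0,T]\times\mathcal{D}$ because $|G|^2$ has an integrable singularity in dimension one and $\mathbf{E}(|\sigma(u_{n,k})|^2)\le C(1+\mathbf{E}\|u_{n,k}\|_\infty^{2q})$ is bounded uniformly in $k$ by \eqref{feq}/\eqref{supref} and the sublinear growth \eqref{LEq}. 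A singular Gronwall (fractional Gronwall) argument in the $t$-variable, combined with the already-established $L^2(\Omega\times([0,T]\times\mathcal{D})^2)$ summability across $k$ coming from the contraction in Lemma \ref{lemma 2.1}, gives $\sup_k\mathbf{E}\big(\|D_{\cdot,\cdot}u_{n,k}\|_{L^2([0,T]\times\mathcal{D})}^2\big)<\infty$ for each fixed $(x,t)$, and moreover $\sup_k\mathbf{E}\int\!\!\int\|D_{\cdot,\cdot}u_{n,k}(x,t)\|_{L^2}^2\,dxdt<\infty$.

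Finally, since $u_{n,k}\to u_n$ in $L^2(\Omega)$ (indeed in $L^p(\infty,\Omega)$) by Lemma \ref{lemma 2.1} and the Malliavin derivatives are bounded uniformly in $D^{1,2}$, the closedness of the operator $D$ on $D^{1,2}$ (\cite{N}, p. 27) yields $u_n\in D^{1,2}$ with $D_{y,s}u_{n,k}\to D_{y,s}u_n$ weakly in $L^2(\Omega\times[0,T]\times\mathcal{D})$; passing to the limit in the recursion — using continuity of $\sigma',f_n'$ so that $\tilde{\mathcal{G}_i}(n)(u_{n,k})\to\tilde{\mathcal{G}_i}(n)(u_n)$, and the stochastic-integral convergence as in the proof of \eqref{picl} — shows $D_{y,s}u_n$ solves \eqref{MDE}. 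Uniqueness for \eqref{MDE} follows from a singular-Gronwall argument identical to the one used for \eqref{piecewise ff} in Lemma \ref{lemma 2.1}, since \eqref{MDE} is linear in $D_{y,s}u_n$ with bounded coefficients. The uniform bound $\sup_k\mathbf{E}\int\!\!\int\|D u_{n,k}\|_{L^2}^2\,dxdt<\infty$ passes to $u_n$ by Fatou, giving $u_n\in L^{1,2}$, which is assertion (3). The main obstacle I expect is the rigorous justification of differentiating the Walsh stochastic integral under $D_{y,s}$ and handling the extra $\Delta G$ term: unlike the pure Cahn–Hilliard case of \cite{CW1}, the second-order Allen–Cahn operator forces the $|\Delta G(x,y,t-s)|\sim (t-s)^{-1/2}$ singularity into the drift, so the Gronwall step must be the \emph{fractional} (singular-kernel) version, and one must verify the boundary term $G(x,y,t-s)\sigma(u_n(y,s))$ lies in $L^2$ in $(y,s)$ — which is exactly where the sublinear growth restriction $q<\tfrac13$ on $\sigma$ is used.
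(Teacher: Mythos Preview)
Your proposal is correct and follows essentially the same approach as the paper: Picard iteration plus induction to show $u_{n,k}\in D^{1,2}$ with uniform bounds, the chain rule/Proposition 1.2.4 of \cite{N} to identify $\tilde{\mathcal{G}}_i$, closedness of $D$ (Lemma 1.2.3 of \cite{N}) to pass to the limit, and a Gronwall argument for uniqueness of \eqref{MDE}. The only minor technical difference is that where you invoke a fractional/singular Gronwall lemma, the paper instead bootstraps the singular kernel $(t-\tau)^{-1/4}$ (arising from the $G^2$ term after BDG) back into the inequality once to reduce it to a standard Gronwall, and handles the $\Delta G$ term via H\"older so that no singularity remains there; both routes yield the same uniform bound.
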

\begin{proof}
First, we will prove that the Cauchy sequence $\{u_{n,k} \}_{k
\in \mathbb{N}}$ (as we described in Lemma (\ref{lemma 2.1}))
belongs  to the space $D^{1,2}$ for all $(x,t) \in [0,T] \times
\mathcal{D}$, by using induction and the Picard iteration scheme.


For $k=0$, the function $u_{n,0}$ is deterministic with Malliavin
derivative $Du_{n,0}=0$. Thus $u_{n,0} \in D^{1,2}.$

We proceed with induction.

We suppose for $k \ge 0 $ that for any $i\leq k$, $u_{n,i} \in
D^{1,2}$ for every $(x,t) \in [0,T] \times \mathcal{D}$, and that
$$
\sup_{t\in[0,T]}\sup_{i\leq
k}\mathbf{E}\Big{(}\int_0^t\int_{\mathcal{D}}\|
D_{s,y}u_{n,i}(\cdot,t)\|_{L^\infty(\mathcal{D})}^2dyds\Big{)}<\infty.
$$
We shall prove that for any $i\leq k+1$, $u_{n,i} \in D^{1,2}$
for every $(x,t) \in [0,T] \times \mathcal{D}$ also (i.e.,
$u_{n,k+1} \in D^{1,2}$ for every $(x,t) \in [0,T] \times
\mathcal{D}$), and
$$
\sup_{t\in[0,T]}\sup_{i\leq
k+1}\mathbf{E}\Big{(}\int_0^t\int_{\mathcal{D}}\|
D_{s,y}u_{n,i}(\cdot,t)\|_{L^\infty(\mathcal{D})}^2dyds\Big{)}<\infty,
$$
also (the bounds being independent of $k$). Note that the integral
for $s\in[0,t]$ coincides with the integral for $s\in[0,T]$, since
the Malliavin derivative involved is zero for any $s>t$. This
will result that
\begin{equation}\label{indmal}
\begin{split}
&\forall\;k\;\;\;\exists\;\;u_{n,k} \in D^{1,2}\;\;\forall\;(x,t)
\in \mathcal{D}\times[0,T],\;\;\mbox{and}\\
&\sup_{t\in[0,T]}\sup_{k}\mathbf{E}\Big{(}\int_0^T\int_{\mathcal{D}}\|
D_{s,y}u_{n,k}(\cdot,t)\|_{L^\infty(\mathcal{D})}^2dyds\Big{)}<\infty.
\end{split}
\end{equation}


We apply the Malliavin derivative to $(\ref{Picard iteration
k+1})$, and get, since it is a linear operator
\begin{equation}\label{mdk}
\begin{split}
D_{y,s}(u_{n,k+1}(x,t))=&:D_{y,s}u_{n,k+1}(x,t) = D_{y,s}\Big{[}\int_{\mathcal{D}}u_0(y)G(x,z,t) dz\Big{]} \\
&+ D_{y,s}\Big{[}\int_{0}^{t}\int_{\mathcal{D}}[\Delta G(x,z,t-\tau)-G(x,z,t-\tau)] f_n(u_{n,k}(z,\tau))
 dz d\tau\Big{]}\\
&+
D_{y,s}\Big{[}\int_{0}^{t}\int_{\mathcal{D}}G(x,z,t-\tau)\sigma(u_{n,k}(z,\tau))W(dz,d\tau)\Big{]}\\
=&0+\int_{0}^{t}\int_{\mathcal{D}}D_{y,s}\Big{(}[\Delta G(x,z,t-\tau)-G(x,z,t-\tau)] f_n(u_{n,k}(z,\tau))
\Big{)} dz d\tau\\
&+G(x,y,t-s)\sigma(u_{n,k}(y,s))\\
&+\int_{0}^{t}\int_{\mathcal{D}}D_{y,s}\Big{(}G(x,z,t-\tau)\sigma(u_{n,k}(z,\tau))\Big{)}W(dz,d\tau)\\
=&\int_{0}^{t}\int_{\mathcal{D}}D_{y,s}\Big{(}\Delta
G(x,z,t-\tau)-G(x,z,t-\tau)\Big{)}
f_n(u_{n,k}(z,\tau)) dz d\tau\\
&+\int_{0}^{t}\int_{\mathcal{D}}[\Delta
G(x,z,t-\tau)-G(x,z,t-\tau)]
D_{y,s}\Big{(}f_n(u_{n,k}(z,\tau))\Big{)} dz d\tau\\
&+G(x,y,t-s)\sigma(u_{n,k}(y,s))\\
&+\int_{0}^{t}\int_{\mathcal{D}}D_{y,s}\Big{(}G(x,z,t-\tau)\Big{)}\sigma(u_{n,k}(z,\tau))W(dz,d\tau)\\
&+\int_{0}^{t}\int_{\mathcal{D}}G(x,z,t-\tau)D_{y,s}\Big{(}\sigma(u_{n,k}(z,\tau))\Big{)}W(dz,d\tau)\\
=&0+\int_{0}^{t}\int_{\mathcal{D}}[\Delta
G(x,z,t-\tau)-G(x,z,t-\tau)]
D_{y,s}\Big{(}f_n(u_{n,k}(z,\tau))\Big{)} dz d\tau\\
&+G(x,y,t-s)\sigma(u_{n,k}(y,s))\\
&+0+\int_{0}^{t}\int_{\mathcal{D}}G(x,z,t-\tau)D_{y,s}\Big{(}\sigma(u_{n,k}(z,\tau))\Big{)}W(dz,d\tau)\\
=&\int_{s}^{t}\int_{\mathcal{D}}[\Delta
G(x,z,t-\tau)-G(x,z,t-\tau)]
D_{y,s}\Big{(}f_n(u_{n,k}(z,\tau))\Big{)} dz d\tau\\
&+G(x,y,t-s)\sigma(u_{n,k}(y,s))\\
&+\int_{s}^{t}\int_{\mathcal{D}}G(x,z,t-\tau)D_{y,s}\Big{(}\sigma(u_{n,k}(z,\tau))\Big{)}W(dz,d\tau),
\end{split}
\end{equation}
where we used also that the Malliavin derivative is zero when
applied to the deterministic terms $G$, $\Delta G$ (since no
change is observed on $\omega\in\Omega$, they are constant as
functions of $\omega\in\Omega$). Moreover, since the Malliavin
derivative is zero for any $\tau<s$, this resulted to integrals
on $\tau\geq s$.

Here, we note that $D_{y,s}(u_{n,k+1}(x,t))$ is a function of
$y,\;s,\;x,\;t$. In this work, the notation $D_{y,s}f(x,t)$, for
a general function $f$, is used to denote $D_{y,s}(f(x,t))$.

We now use Proposition 1.2.4 of \cite{N}, cf. also in \cite{CW1},
in dimensions $m=1$ (following the Nualart's book notation, since
$u_{n,k}(x,t)\in\mathbb{R}^m$, $m=1$) with the norm used for the
Lipschitz condition being the absolute value. More specifically,
since $u_{n,k}$ belongs to $D^{1,2}$ (true by the induction
hypothesis) and $\sigma$ is Lipschitz uniformly on any $x$ in
$\mathbb{R}$ with $K_\sigma$ its Lipschitz coefficient, then
$\sigma(u_{n,k})$ belongs to $D^{1,2}$ also, and there exists a
random variable $\mathcal{G}_1=\mathcal{G}_1(n,k)$ such that
\begin{equation}\label{form1}
D_{y,s}\Big{(}\sigma(u_{n,k}(x,t))\Big{)}=\mathcal{G}_1(n,k)(x,t)D_{y,s}u_{n,k}(x,t),
\end{equation}
with $\mathcal{G}_1$ bounded (in the absolute value norm) by
$K_\sigma$, uniformly for any $x$, $t$, i.e.,
$$|\mathcal{G}_1(n,k)(x,t)|\leq
K_\sigma,\;\;\forall\;x\in\mathcal{D},\;\forall t\in[0,T].$$
Since $K_\sigma$ is independent of $n$, $k$, we have finally
\begin{equation}\label{lipder}
\sup_{n,k,(x,t)\in
\mathcal{D}\times[0,T]}|\mathcal{G}_1(n,k)(x,t)|\leq K_\sigma.
\end{equation}

The same argument can be applied for $f_n$ in place of $\sigma$,
since $f_n$ is also Lipschitz uniformly on $\mathbb{R}$. Indeed,
there exists a random variable $\mathcal{G}_2=\mathcal{G}_2(n,k)$
such that
\begin{equation}\label{form2}
D_{y,s}\Big{(}f_n(u_{n,k}(x,t))\Big{)}=\mathcal{G}_2(n,k)(x,t)D_{y,s}u_{n,k}(x,t),
\end{equation}
and
\begin{equation}\label{lipder2}
\sup_{k,(x,t)\in
\mathcal{D}\times[0,T]}|\mathcal{G}_2(n,k)(x,t)|\leq K_{f_n},
\end{equation}
for $K_{f_n}$ a positive constant, depending on $n$ through $f_n$.

Therefore, \eqref{form1} and \eqref{form2}, together with
\eqref{mdk}, give finally for any $s\leq t$
\begin{equation}\label{mdkn}
\begin{split}
D_{y,s}u_{n,k+1}(x,t)=&\int_{s}^{t}\int_{\mathcal{D}}[\Delta
G(x,z,t-\tau)-G(x,z,t-\tau)]
\mathcal{G}_2(n,k)(z,\tau)D_{y,s}u_{n,k}(z,\tau) dz d\tau\\
&+G(x,y,t-s)\sigma(u_{n,k}(y,s))\\
&+\int_{s}^{t}\int_{\mathcal{D}}G(x,z,t-\tau)\mathcal{G}_1(n,k)(z,\tau)D_{y,s}u_{n,k}(z,\tau)W(dz,d\tau),
\end{split}
\end{equation}
while for $s>t$
$$D_{y,s}u_{n,k+1}(x,t)=0.$$
Taking absolute value at both sides of \eqref{mdkn}, and then $p$
powers for $p\geq 2$ , we get
\begin{equation*}
\begin{split}
 | D_{s,y}u_{n,k+1}(x,t)|^p \le &c|G(x,y,t-s)\sigma(u_{n,k}(y,s)) |^p \\
&+c\Big | \int_{s}^{t}\int_{\mathcal{D}}[\Delta G(x,z,
t-\tau)-G(x,z, t-\tau)] \mathcal{G}_2(n,k)(z,\tau)D_{s,y}u_{n,k}(z, \tau)  dz d \tau \Big |^p   \\
&+c \Big |\int_{s}^{t}\int_{\mathcal{D}}G(x,z, t-\tau)
\mathcal{G}_1(n,k)(z,\tau)D_{s,y} u_{n,k}(z,\tau) W(dz,d \tau)
\Big |^p,
\end{split}
\end{equation*}
which gives by \eqref{lipder2}
\begin{equation*}
\begin{split}
 \| D_{s,y}u_{n,k+1}(\cdot,t)\|_{L^\infty(\mathcal{D})}^p \le
 &c\|G(\cdot,y,t-s)\sigma(u_{n,k}(y,s)) \|_{L^\infty(\mathcal{D})}^p \\
&+cK_{f_n}\Big \| \int_{s}^{t}\int_{\mathcal{D}}|\Delta G(\cdot,z,
t-\tau)-G(\cdot,z, t-\tau)|
|D_{s,y}u_{n,k}(z, \tau)|  dz d \tau \Big \|_{L^\infty(\mathcal{D})}^p   \\
&+c \Big \|\int_{s}^{t}\int_{\mathcal{D}}G(\cdot,z, t-\tau)
\mathcal{G}_1(n,k)(z,\tau)D_{s,y} u_{n,k}(z,\tau) W(dz,d \tau)
\Big \|_{L^\infty(\mathcal{D})}^p.
\end{split}
\end{equation*}
We integrate for $y\in\mathcal{D},\;s\in [0,t]$ and then take
expectation, to derive
\begin{equation}\label{mm}
\begin{split}
 \mathbf{E}\Big{(}&\int_0^t\int_{\mathcal{D}}\| D_{s,y}u_{n,k+1}(\cdot,t)\|_{L^\infty(\mathcal{D})}^pdyds\Big{)}
  \le c\mathbf{E}\Big{(}\int_0^t\int_{\mathcal{D}}\|G(\cdot,y,t-s)
  \sigma(u_{n,k}(y,s)) \|_{L^\infty(\mathcal{D})}^pdyds\Big{)} \\
&+cK_{f_n}\mathbf{E}\Big{(}\int_0^t\int_{\mathcal{D}}\Big \|
\int_{s}^{t}\int_{\mathcal{D}}|\Delta G(\cdot,z,
t-\tau)-G(\cdot,z, t-\tau)| |D_{s,y}u_{n,k}(z, \tau)|  dz d \tau \Big \|_{L^\infty(\mathcal{D})}^pdyds\Big{)}   \\
&+c \mathbf{E}\Big{(}\int_0^t\int_{\mathcal{D}}\Big
\|\int_{s}^{t}\int_{\mathcal{D}}G(\cdot,z, t-\tau)
\mathcal{G}_1(n,k)(z,\tau)D_{s,y} u_{n,k}(z,\tau) W(dz,d
\tau)\Big \|_{L^\infty(\mathcal{D})}^pdyds\Big{)}\\
:=&M_1(t;k)+M_2(t;k)+M_3(t;k).
\end{split}
\end{equation}

We shall estimate the terms $M_{i}(t;k)$ for $i=1,2,3$.

Considering the term $M_{1}(t;k)$, we have
\begin{equation}\label{d78}
\begin{split}
M_{1}(t;k)=&c\mathbf{E}\Big{(}\int_0^t\int_{\mathcal{D}}\|G(\cdot,y,t-s)\sigma(u_{n,k}(y,s))
\|_{L^\infty(\mathcal{D})}^pdyds\Big{)}\\
\leq& c
\mathbf{E}\Big{(}\int_0^t\int_{\mathcal{D}}\|G(\cdot,y,t-s)\|_{L^\infty(\mathcal{D})}^{2p}dyds\Big{)}
+c\mathbf{E}\Big{(}\int_0^t\int_{\mathcal{D}}|\sigma(u_{n,k}(y,s))
|^{2p}dyds\Big{)}\\
\leq& c
\mathbf{E}\Big{(}\int_0^t\int_{\mathcal{D}}\|G(\cdot,y,t-s)\|_{L^\infty(\mathcal{D})}^{2p}dyds\Big{)}
+c\mathbf{E}\Big{(}\int_0^t\int_{\mathcal{D}}c(1+|u_{n,k}(y,s)|^{2pq})
dyds\Big{)}\\
\leq& c
\mathbf{E}\Big{(}\int_0^t\int_{\mathcal{D}}\|G(\cdot,y,t-s)\|_{L^\infty(\mathcal{D})}^{2p}dyds\Big{)}
+c+c\mathbf{E}\Big{(}\int_0^tc\|u_{n,k}(\cdot,s)\|_{L^\infty(\mathcal{D})}^{2pq}ds\Big{)}\\
\leq& c+c
\mathbf{E}\Big{(}\int_0^t\int_{\mathcal{D}}\|G(\cdot,y,t-s)\|_{L^\infty(\mathcal{D})}^{2p}dyds\Big{)}
+c\int_0^t\mathbf{E}\Big{(}\|u_{n,k}(\cdot,s)\|_{L^\infty(\mathcal{D})}^{2pq}\Big{)}ds,
\end{split}
\end{equation}
where we used the growth of the unbounded noise diffusion, for
$q\in(0,1/3)$, and Fubini's Theorem.

We use the next estimate (1.6) of \cite{CW1}, to get
\begin{equation}\label{m11}
\int_0^{t}\int_{\mathcal{D}}\|G(\cdot,y,t-s)\|_{L^\infty(\mathcal{D})}^{2p}dy
ds \le C \int_0^t|t-s|^{-2pd/4+d/4}ds<\infty,
\end{equation}
for $-2pd/4+d/4=(-2p+1)/4>-1$ (since $d=1$) i.e., for $(2\leq
)p<5/2$.

Also since $2pq\leq 2p<5$ in dimensions $d=1$, using \eqref{supre}
and \eqref{eq23}, we obtain for any $t\in[0,T]$
\begin{equation}\label{m12}
\begin{split}
\mathbf{E}(\|u_{n,k}(\cdot,t)\|_{L^\infty(\mathcal{D})}^{2pq})\leq&
c+c\mathbf{E}(\|u_{n,k}(\cdot,t)-u_n(\cdot,t)\|_{L^\infty(\mathcal{D})}^5)
+c\mathbf{E}(\|u_{n}(\cdot,t)\|_{L^\infty(\mathcal{D})}^5)\\
\leq&
c+c\sup_{t\in[0,T]}\mathbf{E}(\|u_{n}(\cdot,t)\|_{L^\infty(\mathcal{D})}^5)\leq
c+c\leq c.
\end{split}
\end{equation}

Using \eqref{m11}, \eqref{m12} in \eqref{d78}, yields for $2\leq
p<5/2$
\begin{equation}\label{d79}
\sup_{k,t\in[0,T]}M_{1}(t;k)<\infty.
\end{equation}

Considering the term $M_{2}(t;k)$, we choose the inequality
(1.12) of \cite{CW1}, p. 781, for $$H(x,y,t-\tau):=|\Delta
G(x,z,t-\tau)|,\;\;\;\upsilon( z ,\tau):=|D_{s,y}u_{n,k}(z,
\tau)|,\;\;\;\rho=q\mbox{ (of \cite{CW1} notation)
}=\infty,\;\;\;r=1.$$ As in \eqref{da1}, we have
\begin{equation}\label{da1n}
\begin{split}
\Big \| \int_{s}^{t}\int_{\mathcal{D}}|\Delta G(\cdot,z,
t-\tau)||D_{s,y}u_{n,k}(z, \tau)|  dz d \tau \Big
\|_{L^\infty(\mathcal{D})}^p \le c\int_0^t \|
D_{s,y}u_{n,k}(\cdot, \tau) \|^p_{L^\infty(\mathcal{D})} d\tau .
\end{split}
\end{equation}
Using the inequality (1.11) of \cite{CW1}, p. 781, for
$$H(x,z,t-\tau)= |G(x,z,t-\tau)|,\;\;\; \upsilon( z ,\tau):=|D_{s,y}u_{n,k}(z,
\tau)|,\;\;\;\rho=q=\infty,\;\;\;r=1,$$ we get
\begin{equation}\label{da2n}
\begin{split}
\Big \| \int_{s}^{t}\int_{\mathcal{D}}\|G(\cdot,z,
t-\tau)||D_{s,y}u_{n,k}(z, \tau)|  dz d \tau \Big
\|_{L^\infty(\mathcal{D})}^p  \le c\int_0^t \|
D_{s,y}u_{n,k}(\cdot, \tau)\|^p_{L^\infty(\mathcal{D})} d\tau .
\end{split}
\end{equation}
Relations \eqref{da1n}, \eqref{da2n} yield
\begin{equation}\label{he2}
\begin{split}
M_{2}(t;k)=&cK_{f_n}\mathbf{E}\Big{(}\int_0^t\int_{\mathcal{D}}\Big
\| \int_{s}^{t}\int_{\mathcal{D}}|\Delta G(\cdot,z,
t-\tau)-G(\cdot,z, t-\tau)||D_{s,y}u_{n,k}(z, \tau)|  dz d \tau
\Big \|_{L^\infty(\mathcal{D})}^pdyds\Big{)}\\
\leq&cK_{f_n}\mathbf{E}\Big{(}\int_0^t\int_{\mathcal{D}}\int_0^t
\| D_{s,y}u_{n,k}(\cdot, \tau)\|^p_{L^\infty(\mathcal{D})} d\tau
dyds\Big )
\\
\leq&cK_{f_n}\int_0^t\mathbf{E}\Big{(}\int_0^t
\int_{\mathcal{D}}\| D_{s,y}u_{n,k}(\cdot,
\tau)\|^p_{L^\infty(\mathcal{D})} dyds\Big ) d\tau\\
=&cK_{f_n}\int_0^t\mathbf{E}\Big{(}\int_0^{\tau}
\int_{\mathcal{D}}\| D_{s,y}u_{n,k}(\cdot,
\tau)\|^p_{L^\infty(\mathcal{D})} dyds\Big ) d\tau,
\end{split}
\end{equation}
where we used Fubini's Theorem; the integral for $s$ is taken
finally in $[0,\tau]$ since for $s>\tau$ the Malliavin derivative
satisfies $D_{s,y}u_{n,k}(x, \tau)=0$, for any $x$.

For the term $M_3(t;k)$, we have, using Fubini's Theorem and
Burkholder-Davis-Gundy inequality
\begin{equation}\label{he4}
\begin{split}
M_3(k;t)=&c \mathbf{E}\Big{(}\int_0^t\int_{\mathcal{D}}\Big
\|\int_{s}^{t}\int_{\mathcal{D}}G(\cdot,z, t-\tau)
\mathcal{G}_1(n,k)(z,\tau)D_{s,y} u_{n,k}(z,\tau) W(dz,d
\tau)\Big \|_{L^\infty(\mathcal{D})}^pdyds\Big{)}\\
\leq&c \int_0^t\int_{\mathcal{D}}\mathbf{E}\Big{(}\Big
\|\int_{s}^{t}\int_{\mathcal{D}}G(\cdot,z, t-\tau)
\mathcal{G}_1(n,k)(z,\tau)D_{s,y} u_{n,k}(z,\tau) W(dz,d
\tau)\Big \|_{L^\infty(\mathcal{D})}^p\Big{)}dyds\\
\leq&c
\int_0^t\int_{\mathcal{D}}\mathbf{E}\Big{(}\sup_{r\in[0,t]}\sup_{x\in\mathcal{D}}\Big
|\int_{0}^{r}\int_{\mathcal{D}}G(x,z, t-\tau)
\mathcal{G}_1(n,k)(z,\tau)D_{s,y} u_{n,k}(z,\tau) W(dz,d
\tau)\Big |^p\Big{)}dyds\\
\leq&c
\int_0^t\int_{\mathcal{D}}\mathbf{E}\Big{(}\sup_{r\in[0,t]}\sup_{x\in\mathcal{D}}\Big
|\int_{0}^{r}\int_{\mathcal{D}}|G(x,z, t-\tau)|^2
|\mathcal{G}_1(n,k)(z,\tau)|^2|D_{s,y} u_{n,k}(z,\tau)|^2 dzd
\tau\Big |^{p/2}\Big{)}dyds\\
\leq&c
\int_0^t\int_{\mathcal{D}}\mathbf{E}\Big{(}\sup_{r\in[0,t]}\sup_{x\in\mathcal{D}}\Big
(\int_{0}^{r}\int_{\mathcal{D}}|G(x,z, t-\tau)|^2 |D_{s,y}
u_{n,k}(z,\tau)|^2 dzd \tau\Big )^{p/2}\Big{)}dyds\\
=&c
\int_0^t\int_{\mathcal{D}}\mathbf{E}\Big{(}\sup_{r\in[0,t]}\Big
\|\int_{0}^{r}\int_{\mathcal{D}}|G(\cdot,z, t-\tau)|^2 |D_{s,y}
u_{n,k}(z,\tau)|^2 dzd \tau\Big
\|_{L^\infty(\mathcal{D})}^{p/2}\Big{)}dyds,
\end{split}
\end{equation}
where we also used the relation \eqref{lipder}.

As in \eqref{da3}, we choose the inequality (1.13) of \cite{CW1},
p. 781, for
$$H(x,y,t-s)= G^2(x,z,t-\tau),\;\;\; \upsilon(z
,\tau)=|D_{s,y} u_{n,k}(z,\tau)|^2,\;\;\;\rho=q\mbox{ (following
\cite{CW1} notation)}=\infty,\;\;\;r=1,$$ and we obtain
\begin{equation}\label{da3n}
\begin{split}
&\mathbf{E}\Big{(}\sup_{r\in[0,t]}\Big
\|\int_{0}^{r}\int_{\mathcal{D}}|G(\cdot,z, t-\tau)|^2 |D_{s,y}
u_{n,k}(z,\tau)|^2 dzd \tau\Big
\|_{L^\infty(\mathcal{D})}^{p/2}\Big{)}\\
&\le \mathbf{E}\Big{(}\sup_{r\in[0,t]}\Big \|\int_{0}^{r}\|D_{s,y}
u_{n,k}(\cdot,\tau)\|_{L^\infty(\mathcal{D})}^2\int_{\mathcal{D}}|G(\cdot,z,
t-\tau)|^2dzd\tau\Big
\|_{L^\infty(\mathcal{D})}^{p/2}\Big{)}\\
&\le \mathbf{E}\Big{(}\Big (\int_{0}^{t}\|D_{s,y}
u_{n,k}(\cdot,\tau)\|_{L^\infty(\mathcal{D})}^2(t-\tau)^{-2d/4+d/4}d\tau\Big
)^{p/2}\Big{)}\\
&= \mathbf{E}\Big{(}\Big (\int_{0}^{t}\|D_{s,y}
u_{n,k}(\cdot,\tau)\|_{L^\infty(\mathcal{D})}^2(t-\tau)^{-d/4}d\tau\Big
)^{p/2}\Big{)}\\
&= \mathbf{E}\Big{(}\Big (\int_{0}^{t}\|D_{s,y}
u_{n,k}(\cdot,\tau)\|_{L^\infty(\mathcal{D})}^2(t-\tau)^{-d/4}d\tau\Big
)\Big{)},
\end{split}
\end{equation}
where we took $p=2$. We use now estimate \eqref{da3n} to
\eqref{he4}, and arrive at
\begin{equation}\label{he5}
\begin{split}
M_3(k;t)\leq &c
\int_0^t\int_{\mathcal{D}}\mathbf{E}\Big{(}\sup_{r\in[0,t]}\Big
\|\int_{0}^{r}\int_{\mathcal{D}}|G(\cdot,z, t-\tau)|^2 |D_{s,y}
u_{n,k}(z,\tau)|^2 dzd \tau\Big
\|_{L^\infty(\mathcal{D})}\Big{)}dyds\\
\leq &c \int_0^t\int_{\mathcal{D}}\mathbf{E} \bigg
(\int_0^t(t-\tau)^{-d/4}\|D_{s,y}
u_{n,k}(\cdot,\tau)\|_{L^\infty(\mathcal{D})}^2d\tau \bigg
)dyds\\
=&c \mathbf{E} \bigg
(\int_0^t\int_{\mathcal{D}}\int_0^t(t-\tau)^{-d/4}\|D_{s,y}
u_{n,k}(\cdot,\tau)\|_{L^\infty(\mathcal{D})}^2d\tau dyds\bigg )
\\
=&c \mathbf{E} \bigg
(\int_0^t\int_0^t\int_{\mathcal{D}}(t-\tau)^{-d/4}\|D_{s,y}
u_{n,k}(\cdot,\tau)\|_{L^\infty(\mathcal{D})}^2dydsd\tau \bigg )
\\
=&c \mathbf{E} \bigg
(\int_0^t\int_0^\tau\int_{\mathcal{D}}(t-\tau)^{-d/4}\|D_{s,y}
u_{n,k}(\cdot,\tau)\|_{L^\infty(\mathcal{D})}^2dydsd\tau \bigg )\\
=&c \mathbf{E} \bigg
(\int_0^t(t-\tau)^{-d/4}\int_0^\tau\int_{\mathcal{D}}\|D_{s,y}
u_{n,k}(\cdot,\tau)\|_{L^\infty(\mathcal{D})}^2dydsd\tau \bigg
)\\
=&c \int_0^t(t-\tau)^{-d/4}\mathbf{E} \bigg
(\int_0^\tau\int_{\mathcal{D}}\|D_{s,y}
u_{n,k}(\cdot,\tau)\|_{L^\infty(\mathcal{D})}^2dyds\bigg )d\tau,
\end{split}
\end{equation}
since for $s>\tau$, $D_{s,y}u_{n,k}(x, \tau)=0$, for any $x$.

Thus, choosing $p=2$ on \eqref{mm}, and using the estimates
\eqref{d79}, \eqref{he2} and \eqref{he5}, we finally proved since
$d=1$
\begin{equation}\label{d80}
\begin{split}
\mathbf{E}\Big{(}\int_0^t\int_{\mathcal{D}}\|
D_{s,y}u_{n,k+1}(\cdot,t)\|_{L^\infty(\mathcal{D})}^2dyds\Big{)}
\leq& C_0+cK_{f_n}\int_0^t\mathbf{E}\Big{(}\int_0^{\tau}
\int_{\mathcal{D}}\| D_{s,y}u_{n,k}(\cdot,
\tau)\|^2_{L^\infty(\mathcal{D})} dyds\Big ) d\tau\\
&+ c \int_0^t(t-\tau)^{-1/4}\mathbf{E} \bigg
(\int_0^\tau\int_{\mathcal{D}}\|D_{s,y}
u_{n,k}(\cdot,\tau)\|_{L^\infty(\mathcal{D})}^2dyds\bigg )d\tau,
\end{split}
\end{equation}
for $C_0,c>0$ constants independent of $k,\;t$.

We take supremum on $i\leq k$ (the above inequality is true for
any such $i$, from the first induction hypothesis:
$D_{s,y}u_{n,i}\in \mathbf{D}_{1,2}$ for any $i\leq k$), and get
\begin{equation*}
\begin{split}
&\sup_{i\leq k}\mathbf{E}\Big{(}\int_0^t\int_{\mathcal{D}}\|
D_{s,y}u_{n,i+1}(\cdot,t)\|_{L^\infty(\mathcal{D})}^2dyds\Big{)}
\leq C_0+cK_{f_n}\int_0^t\sup_{i\leq
k}\mathbf{E}\Big{(}\int_0^{\tau} \int_{\mathcal{D}}\|
D_{s,y}u_{n,i}(\cdot,
\tau)\|^2_{L^\infty(\mathcal{D})} dyds\Big ) d\tau\\
&+ c \int_0^t(t-\tau)^{-1/4}\sup_{i\leq k}\mathbf{E} \bigg
(\int_0^\tau\int_{\mathcal{D}}\|D_{s,y}
u_{n,i}(\cdot,\tau)\|_{L^\infty(\mathcal{D})}^2dyds\bigg )d\tau\\
\leq& c+C_0+cK_{f_n}\int_0^t\sup_{i\leq
k}\mathbf{E}\Big{(}\int_0^{\tau} \int_{\mathcal{D}}\|
D_{s,y}u_{n,i+1}(\cdot,
\tau)\|^2_{L^\infty(\mathcal{D})} dyds\Big ) d\tau\\
&+ c \int_0^t(t-\tau)^{-1/4}\sup_{i\leq k}\mathbf{E} \bigg
(\int_0^\tau\int_{\mathcal{D}}\|D_{s,y}
u_{n,i+1}(\cdot,\tau)\|_{L^\infty(\mathcal{D})}^2dyds\bigg )d\tau,
\end{split}
\end{equation*}
which gives for
$$A_{n,k+1}(t):=\sup_{i\leq k+1}\mathbf{E}\Big{(}\int_0^t\int_{\mathcal{D}}\|
D_{s,y}u_{n,i}(\cdot,t)\|_{L^\infty(\mathcal{D})}^2dyds\Big{)},$$
\begin{equation}\label{d81}
\begin{split}
A_{n,k+1}(t) \leq& c+C_0+cK_{f_n}\int_0^tA_{n,k+1}(\tau) d\tau+ c
\int_0^t(t-\tau)^{-1/4}A_{n,k+1}(\tau)d\tau.
\end{split}
\end{equation}
From \eqref{d81} and since $A_{n,k+1}\geq 0$, we obtain
\begin{equation}\label{d82}
\begin{split}
\int_0^t(t-\tau)^{-1/4}A_{n,k+1}(\tau)d\tau \leq&
(c+C_0)\int_0^t(t-\tau)^{-1/4}d\tau+cK_{f_n}\int_0^t(t-\tau)^{-1/4}\int_0^\tau
A_{n,k+1}(s) dsd\tau\\
&+ c
\int_0^t(t-\tau)^{-1/4}\int_0^\tau(\tau-s)^{-1/4}A_{n,k+1}(s)dsd\tau\\
\leq &c+cK_{f_n}\int_0^t(t-\tau)^{-1/4}\int_0^t
A_{n,k+1}(s) dsd\tau\\
&+ c \int_0^t(t-\tau)^{-1/4}\int_0^t(\tau-s)^{-1/4}A_{n,k+1}(s)dsd\tau\\
= &c+cK_{f_n}\int_0^t(t-\tau)^{-1/4}d\tau\int_0^t
A_{n,k+1}(s) ds\\
&+ c
\int_0^t\int_0^t(t-\tau)^{-1/4}(\tau-s)^{-1/4}A_{n,k+1}(s)dsd\tau\\
= &c+cK_{f_n}\int_0^t(t-\tau)^{-1/4}d\tau\int_0^t
A_{n,k+1}(s) ds\\
&+ c \int_0^t\Big [\int_0^t(t-\tau)^{-1/4}(\tau-s)^{-1/4}d\tau\Big
] A_{n,k+1}(s)ds\\
\leq &c+c\int_0^t A_{n,k+1}(s) ds,
\end{split}
\end{equation}
where we used that
$$\int_0^t(t-\tau)^{-1/4}d\tau<\infty,$$
and
$$\int_0^t(t-\tau)^{-1/4}(\tau-s)^{-1/4}d\tau
\leq
\Big[\int_0^t(t-\tau)^{-1/2}d\tau\Big]^{1/2}\Big[\int_0^t(\tau-s)^{-1/2}d\tau\Big{]}^{1/2}<\infty.$$
So, by using \eqref{d82} in \eqref{d81}, yields
\begin{equation}\label{d83}
\begin{split}
A_{n,k+1}(t) \leq& c+c\int_0^tA_{n,k+1}(\tau) d\tau,
\end{split}
\end{equation}
and by Gronwall's Lemma, we get
\begin{equation*}
\begin{split}
\sup_{i\leq k+1}\mathbf{E}\Big{(}\int_0^t\int_{\mathcal{D}}\|
D_{s,y}u_{n,i}(\cdot,t)\|_{L^\infty(\mathcal{D})}^2dyds\Big{)}=A_{n,k+1}(t)
\leq c=c(n),
\end{split}
\end{equation*}
which gives
\begin{equation}\label{d84}
\begin{split}
\sup_{t\in[0,T]}\sup_{i\leq
k+1}\mathbf{E}\Big{(}\int_0^T\int_{\mathcal{D}}\|
D_{s,y}u_{n,i}(\cdot,t)\|_{L^\infty(\mathcal{D})}^2dyds\Big{)}=\sup_{t\in[0,T]}\sup_{i\leq
k+1}\mathbf{E}\Big{(}\int_0^t\int_{\mathcal{D}}\|
D_{s,y}u_{n,i}(\cdot,t)\|_{L^\infty(\mathcal{D})}^2dyds\Big{)}<\infty.
\end{split}
\end{equation}
Here, we used that $D_{s,y}u_{n,i}(x,t)=0$ for any $s>t$ and thus
the integration is for $s\in[0,T]$, while we note that the bound
is independent of $k$. So, we have, by \eqref{d84}, that
\begin{equation*}
\begin{split}
\|u_{n,k+1}(x,t)\|_{D^{1,2}}:=&\Big{(}\mathbf{E}(|u_{n,k+1}(x,t)|^2)
+\mathbf{E}\Big (\int_0^T\int_\mathcal{D}|D_{s,y}u_{n,k+1}(x,t)|^2dyds\Big )\Big{)}^{1/2}\\
\leq& c+
c\Big{[}\sup_{t\in[0,T]}\mathbf{E}\Big{(}\int_0^T\int_{\mathcal{D}}\|
D_{s,y}u_{n,k+1}(\cdot,t)\|_{L^\infty(\mathcal{D})}^2dyds\Big{)}\Big{]}^{1/2}\\
\leq& c+c\Big{[}\sup_{t\in[0,T]}\sup_{i\leq
k+1}\mathbf{E}\Big{(}\int_0^T\int_{\mathcal{D}}\|
D_{s,y}u_{n,i}(\cdot,t)\|_{L^\infty(\mathcal{D})}^2dyds\Big{)}\Big{]}^{1/2}<\infty,
\end{split}
\end{equation*}
uniformly for any $k$; here, since $2<5$, we used the same
argument of proving \eqref{m12}, but for $2$ in place of $2pq$
(i.e., $\mathbf{E}(|u_{n,k+1}(x,t)|^2)<\infty$, the bound again
independent of $k$). This yields that
\begin{equation}\label{d85}
\exists\;D_{s,y}u_{n,k+1}(x,t)\;\in
D^{1,2}\;\;\forall\;(x,t)\in\mathcal{D}\times[0,T].
\end{equation}

Relations \eqref{d84}, \eqref{d85} complete the induction, and
establish \eqref{indmal}. \vspace{1.0cm}

As proved, for $p\geq 2$
$$\|u_n(\cdot,t)-u_{n,k}(\cdot,t)\|_{L^p(\Omega)}\rightarrow
0\;\;\mbox{as}\;\;k\rightarrow\infty,$$ and so,
\begin{equation}\label{nu1}
u_{n,k}(\cdot,t)\rightarrow u_n(\cdot,t)
\;\;\mbox{as}\;\;k\rightarrow\infty\;\;\mbox{in
the}\;L^2(\Omega)\;\mbox{norm},
\end{equation}
 while as we also proved
\begin{equation}\label{nu2}
u_{n,k}\in D^{1,2}\;\;\forall\;k.
\end{equation}
Moreover for
$$\|D_{\cdot,\cdot}u_{n,k}(x,t)\|_H:=\Big{[}\int_0^T\int_{\mathcal{D}}|D_{y,s}u_{n,k}(x,t)|^2dyds\Big{]}^{1/2},$$
it holds that
\begin{equation}\label{nu3}
\sup_{k}\mathbf{E}(\|D_{\cdot,\cdot}u_{n,k}(x,t)\|_H^2)<\infty,
\end{equation}
since by \eqref{indmal}
\begin{equation*}
\begin{split}
\sup_{k}\mathbf{E}(\|D_{\cdot,\cdot}u_{n,k}(x,t)\|_H^2)=&
\sup_{k}\mathbf{E}\Big(\int_0^T\int_{\mathcal{D}}|D_{s,y}u_{n,k}(x,t)|^2dyds\Big)\\
\leq&
\sup_{t\in[0,T]}\sup_{k}\mathbf{E}\Big{(}\int_0^T\int_{\mathcal{D}}\|
D_{s,y}u_{n,k}(\cdot,t)\|_{L^\infty(\mathcal{D})}^2dyds\Big{)}<\infty.
\end{split}
\end{equation*}
Using Lemma 1.2.3 of \cite{N}, due to \eqref{nu2}, \eqref{nu1},
\eqref{nu3}, we have the first result of this proposition, i.e.,
that
\begin{equation}\label{fres}
u_n(x,t)\in D^{1,2},
\end{equation}
and
$$D_{s,y}u_{n,k}(x,t)\rightarrow D_{s,y}u_{n}(x,t),$$
in the weak topology of $L^2(\Omega;H):=L^2(\Omega\times(
[0,T]\times\mathcal{D}))$, where
$$\|v\|_{L^2(\Omega;H)}:=\Big[\mathbf{E}\Big(\int_0^T\int_\mathcal{D}|v(y,s)|^2dyds\Big{)}\Big ]^{1/2}.$$
(Observe that for $(x,t)$ fixed, $D_{s,y}u_{n,k}(x,t)=v(y,s)$ for
some $v$.)

We remind that $D_{y,s}(u_{n,k+1}(x,t))$ was defined through
\eqref{mdk}. We shall show that $D_{s,y}u_{n}(x,t)$ satisfies
uniquely \eqref{MDE}.

Taking Malliavin derivatives in both sides of spde
\eqref{piecewise ff} (see the analogous calculus and arguments
for $D_{s,y}u_{n,k+1}$ given by \eqref{mdkn}), we obtain that for
any $s\leq t$
\begin{equation*}\label{mdkn2}
\begin{split}
D_{y,s}u_{n}(x,t)=&\int_{s}^{t}\int_{\mathcal{D}}[\Delta
G(x,z,t-\tau)-G(x,z,t-\tau)]
\tilde{\mathcal{G}_2}(n)(z,\tau)D_{y,s}u_{n,k}(z,\tau) dz d\tau\\
&+G(x,y,t-s)\sigma(u_{n}(y,s))\\
&+\int_{s}^{t}\int_{\mathcal{D}}G(x,z,t-\tau)\tilde{\mathcal{G}}_1(n)(z,\tau)D_{y,s}u_{n}(z,\tau)W(dz,d\tau),
\end{split}
\end{equation*}
i.e., \eqref{MDE} is satisfied, while for $s>t$
$$D_{y,s}u_{n}(x,t)=0.$$
Here, $\tilde{\mathcal{G}_1}(n)(z,\tau)$,
$\tilde{\mathcal{G}_2}(n)(z,\tau)$ are bounded, and satisfy
$$D_{y,s} (\sigma(u_n(z,\tau))) = \tilde{\mathcal{G}_1}(n)(z,\tau)D_{y,s}(u_n(z,\tau)),$$
$$D_{y,s} (f_n(u_n(z,\tau))) = \tilde{\mathcal{G}_2}(n)(z,\tau)D_{y,s}(u_n(z,\tau)).$$
Indeed, by Proposition 1.2.4 of \cite{N} (as we already used for
$u_{n,k+1}$), since $u_{n}$ belongs to $D^{1,2}$ and $\sigma$ is
Lipschitz uniformly on any $x$ in $\mathbb{R}$ with $K_\sigma$
its Lipschitz coefficient, then $\sigma(u_{n})$ belongs to
$D^{1,2}$ also, and there exists a random variable
$\tilde{\mathcal{G}}_1=\tilde{\mathcal{G}}_1(n)$ such that
\begin{equation}\label{form1n}
D_{y,s}\Big{(}\sigma(u_{n}(x,t))\Big{)}=\tilde{\mathcal{G}}_1(n)(x,t)D_{y,s}u_{n}(x,t),
\end{equation}
with $\tilde{\mathcal{G}}_1$ bounded (in the absolute value norm)
by $K_\sigma$, uniformly for any $x$, $t$, i.e.,
$$|\tilde{\mathcal{G}}_1(n)(x,t)|\leq
K_\sigma,\;\;\forall\;x\in\mathcal{D},\;\forall t\in[0,T].$$
Taking $f_n$ in place of $\sigma$, the same argument - since $f_n$
is also Lipschitz uniformly on $\mathbb{R}$ - yields
\begin{equation}\label{form2n}
D_{y,s}\Big{(}f_n(u_{n}(x,t))\Big{)}=\tilde{\mathcal{G}}_2(n)(x,t)D_{y,s}u_{n}(x,t),
\end{equation}
and
$$
|\tilde{\mathcal{G}}_2(n)(x,t)|\leq \hat{K}_{f_n},
$$
for $\hat{K}_{f_n}$ a positive constant, depending on $n$ through
$f_n$.

Remind that $\sigma$ is continuously differentiable and Lipschitz.

We note that as stated in the proof of Proposition 1.2.4 in
\cite{N}, since $f_n$ is continuously differentiable, then
$$\mathcal{G}_2(n,k)(x,t)=f_n'(u_{n,k}(x,t)),\;\;\;\tilde{\mathcal{G}}_2(n)(x,t)=f_n'(u_n(x,t)),$$
while for the same reason
$$\mathcal{G}_1(n,k)(x,t)=\sigma'(u_{n,k}(x,t)),\;\;\;\tilde{\mathcal{G}}_1(n)(x,t)=\sigma'(u_n(x,t)).$$

We need only to show uniqueness of solution of \eqref{MDE}; note
that from uniqueness of the Malliavin derivative,
$\tilde{\mathcal{G}_1}$, $\tilde{\mathcal{G}_2}$ are uniquely
determined. So, if $\hat{D}_{y,s}u_{n}(x,t)$ is another solution
of \eqref{MDE}, then through linearity of \eqref{MDE} on
$D_{y,s}u_{n}(x,t)$ or on $\hat{D}_{y,s}u_{n}(x,t)$, we get,
applying the same arguments, the analogous result as this for
\eqref{d83}. More specifically, for
$$B_{n}(t):=\mathbf{E}\Big{(}\int_0^t\int_{\mathcal{D}}\|
D_{s,y}u_{n}(\cdot,t)-\hat{D}_{s,y}u_{n}(\cdot,t)\|_{L^\infty(\mathcal{D})}^2dyds\Big{)},$$
we can analogously derive,
\begin{equation}\label{d83n}
\begin{split}
B_{n}(t) \leq& 0+c\int_0^tB_{n}(\tau) d\tau,
\end{split}
\end{equation}
and by Gronwall's Lemma we get that $B_n(t)=0$ for any $t$, i.e.,
$$\mathbf{E}\Big{(}\int_0^t\int_{\mathcal{D}}\|
D_{s,y}u_{n}(\cdot,t)-\hat{D}_{s,y}u_{n}(\cdot,t)\|_{L^\infty(\mathcal{D})}^2dyds\Big{)}=0,\;\;\forall\;t\in[0,T],
$$
which yields finally uniqueness of solution of \eqref{MDE}.

For $(x,t)$ given, we derive that
\begin{equation}\label{l12r}
\begin{split}
&\mathbf{E}\Big(\int_0^T\int_{\mathcal{D}}\int_0^T\int_\mathcal{D}|D_{s,y}u_n(x,t)|^2dydsdxdt\Big{)}
=\int_0^T\int_{\mathcal{D}}\mathbf{E}\Big(\int_0^T\int_\mathcal{D}|D_{s,y}u_n(x,t)|^2dyds\Big{)}dxdt\\
\leq&
c\int_0^T\int_{\mathcal{D}}\mathbf{E}\Big(\int_0^T\int_\mathcal{D}|D_{s,y}u_n(x,t)-D_{s,y}u_{n,k}(x,t)|
^2dyds\Big{)}dxdt\\
&+c\int_0^T\int_{\mathcal{D}}\mathbf{E}\Big(\int_0^T\int_\mathcal{D}|D_{s,y}u_{n,k}(x,t)|^2dyds\Big{)}dxdt\\
&\leq
c\int_0^T\int_{\mathcal{D}}\mathbf{E}\Big(\int_0^T\int_\mathcal{D}|D_{s,y}u_n(x,t)-D_{s,y}u_{n,k}(\cdot,t)|^2
dyds\Big{)}dxdt\\
&+c\int_0^T\int_{\mathcal{D}}\mathbf{E}\Big(\int_0^T\int_\mathcal{D}\|D_{s,y}u_{n,k}(\cdot,t)\|_
{L^\infty(\mathcal{D})}^2dyds\Big{)}dxdt<\infty,
\end{split}
\end{equation}
since, by \eqref{indmal}
$$\mathbf{E}\Big(\int_0^T\int_\mathcal{D}\|D_{s,y}u_{n,k}(\cdot,t)\|_
{L^\infty(\mathcal{D})}^2dyds\Big )<\infty,$$ and due to
\begin{equation}\label{nnn1}
\mathbf{E}\Big(\int_0^T\int_\mathcal{D}|D_{s,y}u_n(x,t)-D_{s,y}u_{n,k}(\cdot,t)|^2
dyds\Big{)}<\infty.
\end{equation}
In the previous argument we applied Fubini's Theorem. Moreover
\eqref{nnn1} holds true since $D_{s,y}u_{n,k}\rightarrow
D_{s,y}u_n$ as $k\rightarrow\infty$ in $L^2(\Omega)$; this
$L^2(\Omega)$ convergence can be easily established analogously
to the way that the $L^2(\Omega)$ convergence of $u_{n,k}$ was
established, i.e, we subtract the relation \eqref{mdkn} - which
defines the sequence of Malliavin derivatives $D_{s,y}u_{n,k}$,
and \eqref{MDE} - which is uniquely solvable for $D_{s,y}u_{n}$,
and derive after straight forward calculations, and since $f_n'$,
$\sigma'$ are continuous, the $L^2(\Omega)$ convergence of the
sequence of derivatives.

 Also, by the estimate \eqref{feq} of Lemma \ref{lemma 2.1}, we have
\begin{equation}\label{l13r}
\begin{split}
\mathbf{E}\Big(\int_0^T\int_\mathcal{D}|u_{n}(x,t)|^2dxdt\Big
)\leq& \int_0^T\mathbf{E}\Big(\int_\mathcal{D}|u_{n}(x,t)|^2dx\Big
)dt\\
\leq&\int_0^T\mathbf{E}\Big(\|u_{n}(\cdot,t)\|_{L^\infty(\mathcal{D})}^2\Big
)dt\\
\leq&\sup_{t\in[0,T]}\mathbf{E}\Big(\|u_{n}(\cdot,t)\|_{L^\infty(\mathcal{D})}^2\Big
)<\infty.
\end{split}
\end{equation}
Relations \eqref{l12r} and \eqref{l13r}, by definition, yield the
final regularity result of this proposition, i.e.,
\begin{equation}\label{eqeqww}
u_n(x,t)\in L^{1,2}.
\end{equation}
\end{proof}

The next Main Theorem is a direct consequence of the previous
arguments.
\begin{theorem}\label{loc}
Let $u$ be the solution of the stochastic Cahn-Hilliard/Allen-Cahn
equation \eqref{sm}, in dimension $d=1$, with smooth initial data
$u_0$. Moreover, let $\sigma$ satisfy for any $x\in\mathbb{R}$
\eqref{LEq}, i.e.,
\begin{equation*}
|\sigma(x)|\leq C(1+|x|^q),
\end{equation*}
for some $C >0$ and any $q \in (0,\frac{1}{3})$, and the
Lipschitz property on $\mathbb{R}$ \eqref{s3}, and also let
$\sigma$ be continuously differentiable on $\mathbb{R}$. Then the
solution $u$ of \eqref{sm} belongs to $L^{1,2}_{\rm loc}\subseteq
D^{1,2}_{\rm loc}$.
\end{theorem}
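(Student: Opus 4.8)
The plan is to assemble the pieces already established into an explicit localizing sequence. By Definition~\ref{def3}, to prove $u\in L^{1,2}_{\rm loc}$ it suffices to exhibit a sequence $\{(\Omega_n,u_n),\;n\geq 1\}\subset\mathcal{F}\times L^{1,2}$ with $\Omega_n\uparrow\Omega$ a.s.\ and $u=u_n$ a.s.\ on $\Omega_n$; since $L^{1,2}\subseteq D^{1,2}$, the very same sequence is a localization of $u$ in $D^{1,2}$, so that $L^{1,2}_{\rm loc}\subseteq D^{1,2}_{\rm loc}$ and the stated inclusion follows. The candidate sequence is the one already constructed: $\Omega_n:=\{\omega\in\Omega:\sup_{t\in[0,T]}\sup_{x\in\mathcal{D}}|u(x,t;\omega)|<n\}$ and $u_n$ the solution of the piece-wise problem \eqref{piecewise ff}.

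First I would record membership. By Lemma~\ref{lemma 2.1}, for each $n$ the problem \eqref{piecewise ff} has a unique solution $u_n$ satisfying the $L^\infty(\mathcal{D})$-bound \eqref{feq} for every $p\geq 2$; by Proposition~\ref{LL234}, $u_n\in L^{1,2}$. Hence $(\Omega_n,u_n)\in\mathcal{F}\times L^{1,2}$ for every $n$, and by construction $\Omega_1\subseteq\Omega_2\subseteq\cdots\subseteq\Omega$.

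Next I would verify the two localization conditions. For $\Omega_n\uparrow\Omega$ a.s.: because $d=1$, $u_0$ is smooth and $\sigma$ is Lipschitz with sublinear growth of order $q<1/3$, the regularity result of \cite{AKM} gives that $u$ has continuous paths a.s.\ on the compact set $[0,T]\times\overline{\mathcal{D}}$, so $\sup_{t\in[0,T]}\sup_{x\in\mathcal{D}}|u(x,t;\omega)|<\infty$ for a.e.\ $\omega$; consequently $\bigcup_n\Omega_n$ has full measure and $P(\Omega_n)\to 1=P(\Omega)$. For $u=u_n$ a.s.\ on $\Omega_n$: on $\Omega_n$ one has $|u(x,t)|<n$ for all $(x,t)$, hence $H_n(|u(x,t)|)=1$ and so $f_n(u)=f(u)$ on $\Omega_n$; therefore the restriction of $u$ to $\Omega_n$ solves the integral equation \eqref{piecewise ff}, and by the uniqueness (in fact indistinguishability on $\Omega_n$, using a.s.\ continuity of both $u$ and $u_n$) established at the end of the proof of Lemma~\ref{lemma 2.1}, we conclude $u=u_n$ a.s.\ on $\Omega_n$.

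Finally I would draw the conclusion: the sequence $\{(\Omega_n,u_n)\}$ satisfies both requirements of Definition~\ref{def3} with $L=L^{1,2}$, so $u\in L^{1,2}_{\rm loc}$; since a localization in $L^{1,2}$ is automatically a localization in $D^{1,2}$ (Remark~\ref{remnew1}), also $u\in D^{1,2}_{\rm loc}$, and the Malliavin derivative of $u$ is then unambiguously defined by $D_{y,s}u=D_{y,s}u_n$ on $\Omega_n$ via Remark~\ref{remd}. The only genuinely delicate point is the step $P(\Omega_n)\to 1$: it rests entirely on the a.s.\ continuity of $u$, which is available in \cite{AKM} precisely for $d=1$; in dimensions $d=2,3$ only maximal solutions are known and this argument breaks down. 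The identification $u=u_n$ on $\Omega_n$ is the other point requiring care, but it has already been reduced to the uniqueness/indistinguishability statement proved in Lemma~\ref{lemma 2.1}.
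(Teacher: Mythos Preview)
Your proof is correct and follows essentially the same approach as the paper: the paper's own proof is a one-liner invoking the previously constructed localization $(\Omega_n,u_n)$ with $u_n\in L^{1,2}\subseteq D^{1,2}$, and you have simply spelled out the verification of the two localization conditions in Definition~\ref{def3} more explicitly. All the ingredients you cite (Lemma~\ref{lemma 2.1}, Proposition~\ref{LL234}, a.s.\ continuity from \cite{AKM}, and the resulting $P(\Omega_n)\to 1$) are exactly those the paper relies on.
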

\begin{proof}
Indeed, since we constructed a localization of $u$, by
$(\Omega_n,u_n)$, $n\in \mathbb{N}$, with $u_n$ proven to be in
$L^{1,2}\subseteq D^{1,2}$.
\end{proof}

\begin{remark}\label{locd}
As already stated, the Malliavin derivative $D_{y,s}u$ is defined
well by the Malliavin derivatives of the restrictions
$u|_{\Omega_n}$ on $\Omega_n$:
$$D_{y,s}u:=D_{y,s}u_n,\;\;\mbox{on}\;\;\Omega_n.$$
\end{remark}
\section{Existence of a density for $u$}
In order to establish existence of a density for the solution $u$
of \eqref{sm}, we prove first the absolute continuity of the
approximation $u_n$.
\subsection{Absolute continuity of $u_n$}
Our aim is to prove that for $t>0$ and for $x\in[0,\pi]$
\begin{equation}\label{aim}
\|D_{\cdot,\cdot}u_n(x,t)\|_H^2=\int_0^t\int_\mathcal{D}|D_{y,s}u_n(x,t)|^2dyds>0,
\end{equation}
with probability $P=1$.

\begin{remark}\label{locderiv}
If we prove the above, then $\|D_{\cdot,\cdot}u_n(x,t)\|_H>0$
almost surely, while we have proved that $u_n\in D^{1,2}\subseteq
D_{\rm loc}^{1,2}\subseteq D_{\rm loc}^{1,1}$ (obviously applying
H\"older's inequality on the formula of
$\|\cdot\|_{D^{1,1}}$-norm where, cf. p. 27 of \cite{N},
$\|v||_{D^{1,1}}:=\mathbf{E}(|v|)+\mathbf{E}(\|D_{\cdot,\cdot}\|_H)$,
we see that $D^{1,2}\subseteq D^{1,1}$) and thus, according to
Theorem 2.1.3 of \cite{N} p. 98, $u_n$ is absolutely continuous
with respect to the Lebesgue measure on $\mathbb{R}$ (see also
the analogous argument used in \cite{CW1}).

Applying the same argument of Theorem 2.1.3 of \cite{N}, for $u$
this time, since by Theorem \ref{loc} $u\in D^{1,2}_{\rm loc}$,
in order to prove absolute continuity for $u$, we need to prove
that
$$\|D_{\cdot,\cdot}u\|_H>0, \;\;\;\;\mbox{almost surely}.$$

More specifically, the aforementioned Theorem 2.1.3 states: Let
$F$ be a random variable of the space $D_{\rm loc}^{1,1}$, and
suppose that $\|DF\|_H > 0$ a.s. Then the law of $F$ is absolutely
continuous with respect to the Lebesgue measure on $\mathbb{R}$.

In our case, we defined the space-time Malliavin derivative
operator $D:=D_{s,y}$ and $H:=L^2([0,T]\times\mathcal{D})$, while
we apply this theorem for $u_n$, $u$, for which we have shown that
$u_n\in D^{1,2}\subseteq D_{\rm loc}^{1,2}\subseteq D_{\rm
loc}^{1,1}$, $u\in D_{\rm loc}^{1,2}\subseteq D_{\rm loc}^{1,1}$.
\vspace{1.0 cm}

We prove now why the validity of \eqref{aim} is sufficient for
establishing the absolute continuity of $u$.

Let $\omega\in
A:=\cup_{k=1}^{n}\Omega_k=\Omega_n\subseteq\Omega$, then
$u(x,t;\omega)=u_n(x,t;\omega)$ a.s., and thus, for
\begin{equation*}
\begin{split}
&B:=\{\omega \in
\Omega:\;\|D_{\cdot,\cdot}u(x,t;\omega)\|_H>0\}\supseteq
C:=\{\omega \in A:\;\|D_{\cdot,\cdot}u(x,t;\omega)\|_H>0\},\\
&D_n:=\{\omega \in
A=\Omega_n:\;\|D_{\cdot,\cdot}u_n(x,t;\omega)\|_H>0\},
\end{split}
\end{equation*}
we have $P(C)=P(D_n)$ for any $n$. Set
$$Z:=\{\omega\in\Omega:\;\|D_{\cdot,\cdot}u_n(x,t;\omega)\|_H>0\}.$$
So, if \eqref{aim} is valid, then $P(Z)=1$, which gives
$P(Z^c)=0$.  But, observe that
$$D_n^c=\{\omega \in
A=\Omega_n:\;\|D_{\cdot,\cdot}u_n(x,t;\omega)\|_H\leq
0\}\subseteq \{\omega \in
\Omega:\;\|D_{\cdot,\cdot}u_n(x,t;\omega)\|_H\leq 0\}=Z^c,$$ so,
$$P(D_n^c)\leq P(Z^c)=0,$$
i.e., $P(D_n^c)=0$ and so $P(D_n)=1$. Thus, we have $$1\geq
P(B)\geq P(C)=P(D_n)=1,$$ which yields $P(B)=1$. Hence, indeed
$\|D_{\cdot,\cdot}u\|_H>0$, almost surely, and as already argued,
the law of $u$ is absolutely continuous with respect to the
Lebesgue measure on $\mathbb{R}$.
\end{remark}

In the sequel, we shall present two very important and difficult
estimates that are derived after treating carefully the growth of
the unbounded noise diffusion $\sigma$.
\begin{lemma}\label{leps}
Under the assumptions of Theorem \ref{loc}, the next estimates
hold true
\begin{equation}\label{d84un}
\sup_{t\in[\hat{s}-\varepsilon,\hat{s}]}\mathbf{E}\Big{(}\int_{\hat{s}-\varepsilon}^{\hat{s}}\int_{\mathcal{D}}\|
D_{s,y}u_{n}(\cdot,t)\|_{L^\infty(\mathcal{D})}^2dyds\Big{)}<C(n)\varepsilon^{2/3},
\end{equation}
and
\begin{equation}\label{d84un*}
\sup_{t\in[\varepsilon,T]}\mathbf{E}\Big{(}\int_{t-\varepsilon}^{t}\int_{\mathcal{D}}\|
D_{s,y}u_{n}(\cdot,t)-G(\cdot,y,t-s)\sigma(u_n(y,s))\|_{L^\infty(\mathcal{D})}^2dyds\Big{)}<C(n)\varepsilon^{17/12},
\end{equation}
 for any $\hat{s}\geq 0$, where
$\varepsilon<\min\{1,\hat{s}\}$, and $C(n)>0$ is a constant
independent of $t$, $\varepsilon$.
\end{lemma}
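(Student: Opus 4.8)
The plan is to read off both estimates from the linear spde \eqref{MDE} satisfied by $D_{y,s}u_{n}$, whose right-hand side is the sum of three terms: the drift integral against $\Delta G-G$ (carrying the bounded factor $\tilde{\mathcal{G}}_{2}(n)$), the inhomogeneous ``leading'' term $G(x,y,t-s)\sigma(u_{n}(y,s))$, and the It\^o integral against $G$ (carrying the bounded factor $\tilde{\mathcal{G}}_{1}(n)$). For \eqref{d84un} I would take $\|\cdot\|_{L^{\infty}(\mathcal{D})}$ in the $x$-variable, square, integrate in $(y,s)$ over the window $[\hat s-\varepsilon,\hat s]$ (equivalently over $[\hat s-\varepsilon,t]$, since $D_{y,s}u_{n}(\cdot,t)=0$ for $s>t$) and take expectation, so that the left-hand side of \eqref{d84un} is the value at $t$ of $F(t):=\mathbf{E}\big(\int_{\hat s-\varepsilon}^{t}\int_{\mathcal{D}}\|D_{s,y}u_{n}(\cdot,t)\|_{L^{\infty}(\mathcal{D})}^{2}\,dy\,ds\big)$, and then estimate the three contributions separately.

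The inhomogeneous term is the crux. Since $\sigma(u_{n}(y,s))$ does not depend on $x$, its contribution is $\mathbf{E}\big(\int\!\!\int\|G(\cdot,y,t-s)\|_{L^{\infty}(\mathcal{D})}^{2}|\sigma(u_{n}(y,s))|^{2}\,dy\,ds\big)$; I bound $|\sigma(u_{n}(y,s))|^{2}\le C(1+|u_{n}(y,s)|^{2q})$ by \eqref{LEq}, apply H\"older in $(y,s)$ to split off the deterministic Green's-function factor from the $u_{n}$-factor with a suitable conjugate pair of exponents (in $d=1$ one may take $\tfrac32$ and $3$), control the Green's-function factor by estimate (1.6) of \cite{CW1}, i.e.\ $\int_{\mathcal{D}}\|G(\cdot,y,t-s)\|_{L^{\infty}(\mathcal{D})}^{2p}dy\le C|t-s|^{-(2p-1)/4}$ (integrable in time for $p<\tfrac52$ when $d=1$), and control the $u_{n}$-factor by the $L^{\infty}(\mathcal{D})$-moment bound \eqref{feq} of Lemma \ref{lemma 2.1}, which holds for every $p\ge2$ and is where $q<\tfrac13$ is used. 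Since the $(y,s)$-integration runs over a window of length $\varepsilon$ and the time singularity is integrable, this produces the factor $\varepsilon^{2/3}$. For the drift term and the stochastic term I use the Cardon-Weber inequalities (1.11)--(1.13) of \cite{CW1} already employed in Proposition \ref{LL234} (for the stochastic term together with Burkholder--Davis--Gundy and $|\tilde{\mathcal{G}}_{i}(n)|\le K$), keeping the kernels $(t-\tau)^{-1/2}$ from $\Delta G$ and $(t-\tau)^{-1/4}$ from $G^{2}$; this yields a closed inequality $F(t)\le C(n)\varepsilon^{2/3}+c\int_{\hat s-\varepsilon}^{t}\big[(t-\tau)^{-1/2}+(t-\tau)^{-1/4}\big]F(\tau)\,d\tau$. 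Both kernels being integrable on intervals of length $\varepsilon$, a singular (fractional) Gronwall inequality closes this and gives $F(t)\le C(n)\varepsilon^{2/3}$ uniformly over $t$ in the window, which is \eqref{d84un}.

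For \eqref{d84un*} the key observation is that, by \eqref{MDE}, the difference $D_{y,s}u_{n}(x,t)-G(x,y,t-s)\sigma(u_{n}(y,s))$ is exactly the sum of the drift integral and the It\^o integral, so no fresh inhomogeneous term appears. I would then estimate its $L^{\infty}(\mathcal{D})$-square, integrated in $(y,s)$ over $[t-\varepsilon,t]$ and in expectation, by the same Cardon-Weber inequalities and Burkholder--Davis--Gundy, so that each of the two contributions is bounded by $c\int_{t-\varepsilon}^{t}(t-\tau)^{-\gamma}\,\mathbf{E}\big(\int_{t-\varepsilon}^{\tau}\int_{\mathcal{D}}\|D_{s,y}u_{n}(\cdot,\tau)\|_{L^{\infty}(\mathcal{D})}^{2}\,dy\,ds\big)\,d\tau$, with $\gamma=\tfrac14$ for the It\^o term (and, for the drift term, an extra $(t-\tau)^{-1/2}$ factor that integrates to a further positive power of $\varepsilon$). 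Inserting the estimate \eqref{d84un} just proved (with $\hat s:=t$, using $\varepsilon\le t$), which bounds the inner expectation by $C(n)\varepsilon^{2/3}$, and integrating the residual singularity $(t-\tau)^{-1/4}$ over the window (contributing $c\,\varepsilon^{3/4}$), the product is $C(n)\varepsilon^{2/3}\cdot\varepsilon^{3/4}=C(n)\varepsilon^{17/12}$, while the drift contribution is of strictly higher order in $\varepsilon$ and is absorbed, giving \eqref{d84un*}.

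The main obstacle is the inhomogeneous term in \eqref{d84un}: one must juggle three requirements at once --- keeping the power of the Green's function below the integrability threshold, absorbing the polynomial growth of the \emph{unbounded} $\sigma$ via the $L^{\infty}(\mathcal{D})$-moment bounds \eqref{feq} (which is precisely why one works with $L^{\infty}$-in-space rather than $L^{2}$-in-space estimates), and extracting the sharp power $\varepsilon^{2/3}$ from the short time interval --- and it is exactly in balancing the H\"older exponents for this step that the hypothesis $q<\tfrac13$ enters. A subsidiary technical point is that the $\Delta G$-kernel is borderline in the relevant range of moments (its contribution carries the singularity $(t-\tau)^{-1/2}$), so the recursion for $F$ must be closed by a singular Gronwall argument rather than the classical one.
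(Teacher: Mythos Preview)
Your plan tracks the paper's proof: split along the three summands in \eqref{MDE}, control the inhomogeneous $G\sigma(u_{n})$-term via a H\"older/Young split combined with the Green's estimate (1.6) of \cite{CW1} and the moment bound \eqref{feq}, bound the drift and It\^o integrals via the Cardon--Weber inequalities (1.11)--(1.13) and Burkholder--Davis--Gundy, and close a recursion for $L_{n}(t)$ (your $F(t)$). The only cosmetic differences for \eqref{d84un} are that the paper uses Young with exponents $\alpha=7/6,\ \beta=7$ for the inhomogeneous split (your H\"older pair $\tfrac32,3$ gives the same $\varepsilon^{2/3}$), and that it iterates the $(t-\tau)^{-1/4}$ convolution once and then applies classical Gronwall rather than invoking a fractional Gronwall lemma directly. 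For \eqref{d84un} your argument is correct.

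There is, however, a concrete error in your treatment of \eqref{d84un*}. Keeping the $(t-\tau)^{-1/2}$ kernel for the drift and inserting \eqref{d84un} gives
\[
c\int_{t-\varepsilon}^{t}(t-\tau)^{-1/2}\,\varepsilon^{2/3}\,d\tau\;=\;c\,\varepsilon^{2/3}\cdot\varepsilon^{1/2}\;=\;c\,\varepsilon^{7/6},
\]
whereas the It\^o term gives $c\,\varepsilon^{2/3}\cdot\varepsilon^{3/4}=c\,\varepsilon^{17/12}$. Since $7/6<17/12$, the drift is the \emph{dominant} term, not ``strictly higher order'' as you assert; your argument as written only proves the bound $C(n)\varepsilon^{7/6}$. (That would still suffice for Theorem \ref{denu0}, where only an exponent strictly larger than $3/4$ is needed, but it does not establish the lemma as stated.) To reach $17/12$ you must treat the drift more carefully. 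The paper does so by bounding $E_{2}\le c\int_{a}^{t}L_{n}(\tau)\,d\tau$ \emph{without} a singular kernel (via \eqref{he2un}), so the drift contributes $c\,\varepsilon\cdot\varepsilon^{2/3}=c\,\varepsilon^{5/3}$, which is genuinely absorbed by $\varepsilon^{17/12}$. Within your own framework, a weighted Cauchy--Schwarz such as
\[
\Big(\int_{s}^{t}(t-\tau)^{-1/2}\|D\|\,d\tau\Big)^{2}\le c\,(t-s)^{1/4}\int_{s}^{t}(t-\tau)^{-1/4}\|D\|^{2}\,d\tau
\]
(split $(t-\tau)^{-1/2}=(t-\tau)^{-3/8}(t-\tau)^{-1/8}$) also yields $c\,\varepsilon^{5/3}$ for the drift after the $(y,s)$-integration and insertion of \eqref{d84un}, and then the It\^o contribution $c\,\varepsilon^{17/12}$ is indeed the leading one.
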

\begin{proof}
Using the spde \eqref{MDE} for the Malliavin derivative of $u_n$,
we proceed as when using equation \eqref{mdkn} (when we estimated
the Malliavin derivative of $u_{n,k}$) but integrating now on
$(a,t)$ for $t\geq a\geq 0$, instead of $(0,t)$. At the end, we
will use our result for $a:=0$ and for $a:=\hat{s}-\varepsilon$.

More specifically, for $p\geq 2$ , we get
\begin{equation*}
\begin{split}
 | D_{s,y}u_{n}(x,t)|^p \le &c|G(x,y,t-s)\sigma(u_{n}(y,s)) |^p \\
&+c\Big | \int_{s}^{t}\int_{\mathcal{D}}[\Delta G(x,z,
t-\tau)-G(x,z, t-\tau)] \tilde{\mathcal{G}}_2(n)(z,\tau)D_{s,y}u_{n}(z, \tau)  dz d \tau \Big |^p   \\
&+c \Big |\int_{s}^{t}\int_{\mathcal{D}}G(x,z, t-\tau)
\tilde{\mathcal{G}}_1(n)(z,\tau)D_{s,y} u_{n}(z,\tau) W(dz,d
\tau) \Big |^p,
\end{split}
\end{equation*}
which yields by the boundedness of $\tilde{\mathcal{G}}_2$
\begin{equation*}
\begin{split}
 \| D_{s,y}u_{n}(\cdot,t)\|_{L^\infty(\mathcal{D})}^p \le
 &c\|G(\cdot,y,t-s)\sigma(u_{n}(y,s)) \|_{L^\infty(\mathcal{D})}^p \\
&+c\Big \| \int_{s}^{t}\int_{\mathcal{D}}|\Delta G(\cdot,z,
t-\tau)-G(\cdot,z, t-\tau)|
|D_{s,y}u_{n}(z, \tau)|  dz d \tau \Big \|_{L^\infty(\mathcal{D})}^p   \\
&+c \Big \|\int_{s}^{t}\int_{\mathcal{D}}G(\cdot,z, t-\tau)
\tilde{\mathcal{G}}_1(n)(z,\tau)D_{s,y} u_{n}(z,\tau) W(dz,d
\tau) \Big \|_{L^\infty(\mathcal{D})}^p.
\end{split}
\end{equation*}
We integrate the previous for $y\in\mathcal{D},\;s\in [a,t]$ and
then take expectation, to derive
\begin{equation}\label{mmun}
\begin{split}
 \mathbf{E}\Big{(}&\int_{a}^t\int_{\mathcal{D}}
 \| D_{s,y}u_{n}(\cdot,t)\|_{L^\infty(\mathcal{D})}^pdyds\Big{)}
  \le c\mathbf{E}\Big{(}\int_{a}^t\int_{\mathcal{D}}\|G(\cdot,y,t-s)
  \sigma(u_{n}(y,s)) \|_{L^\infty(\mathcal{D})}^pdyds\Big{)} \\
&+c\mathbf{E}\Big{(}\int_{a}^t\int_{\mathcal{D}}\Big \|
\int_{s}^{t}\int_{\mathcal{D}}|\Delta G(\cdot,z,
t-\tau)-G(\cdot,z, t-\tau)| |D_{s,y}u_{n}(z, \tau)|  dz d \tau \Big \|_{L^\infty(\mathcal{D})}^pdyds\Big{)}   \\
&+c \mathbf{E}\Big{(}\int_{a}^t\int_{\mathcal{D}}\Big
\|\int_{s}^{t}\int_{\mathcal{D}}G(\cdot,z, t-\tau)
\tilde{\mathcal{G}}_1(n)(z,\tau)D_{s,y} u_{n}(z,\tau) W(dz,d
\tau)\Big \|_{L^\infty(\mathcal{D})}^pdyds\Big{)}\\
:=&E_1(t)+E_2(t)+E_3(t).
\end{split}
\end{equation}
We set $p=2$. We shall estimate the terms $E_{i}(t)$ for
$i=1,2,3$ when $p=2$.

We have for $1/\alpha+1/\beta=1$
\begin{equation}\label{d78un}
\begin{split}
E_{1}(t)=&c\mathbf{E}\Big{(}\int_{a}^t\int_{\mathcal{D}}\|G(\cdot,y,t-s)\sigma(u_{n}(y,s))
\|_{L^\infty(\mathcal{D})}^pdyds\Big{)}\\
\leq& c
\mathbf{E}\Big{(}\int_{a}^t\int_{\mathcal{D}}\|G(\cdot,y,t-s)\|_{L^\infty(\mathcal{D})}^{p\alpha}dyds\Big{)}
+c\mathbf{E}\Big{(}\int_{a}^t\int_{\mathcal{D}}|\sigma(u_{n}(y,s))
|^{p\beta}dyds\Big{)}\\
\leq& c
\mathbf{E}\Big{(}\int_{a}^t\int_{\mathcal{D}}\|G(\cdot,y,t-s)\|_{L^\infty(\mathcal{D})}^{p\alpha}dyds\Big{)}
+c\mathbf{E}\Big{(}\int_{a}^t\int_{\mathcal{D}}c(1+|u_{n}(y,s)|^{p\beta
q})
dyds\Big{)}\\
\leq& c
\mathbf{E}\Big{(}\int_{a}^t\int_{\mathcal{D}}\|G(\cdot,y,t-s)\|_{L^\infty(\mathcal{D})}^{p\alpha}dyds\Big{)}
+c(t-a)+c\mathbf{E}\Big{(}\int_{t-\varepsilon}^tc\|u_{n}(\cdot,s)\|_{L^\infty(\mathcal{D})}^{p\beta q}ds\Big{)}\\
\leq& c(t-a)+
\mathbf{E}\Big{(}\int_{a}^t\int_{\mathcal{D}}\|G(\cdot,y,t-s)\|_{L^\infty(\mathcal{D})}^{p\alpha}dyds\Big{)}
+c\int_{a}^t\mathbf{E}\Big{(}\|u_{n}(\cdot,s)\|_{L^\infty(\mathcal{D})}^{p\beta
q}\Big{)}ds,
\end{split}
\end{equation}
where we used the growth of the unbounded noise diffusion, for
$q\in(0,1/3)$, and Fubini's Theorem. We shall use $\alpha=7/6$,
$\beta=7$, and $p=2$.

By (1.6) of \cite{CW1}, we have for $d=1$, $p=2$
\begin{equation}\label{m11un}
\begin{split}
\int_{a}^{t}\int_{\mathcal{D}}\|G(\cdot,y,t-s)\|_{L^\infty(\mathcal{D})}^{p\alpha}dy
ds \le& C \int_{a}^t|t-s|^{-p\alpha d/4+d/4}ds=C
\int_{a}^t|t-s|^{-2(7/6)(1/4)+1/4}ds\\
=&C \int_{a}^t|t-s|^{-1/3}ds\leq C(t-a)^{2/3}.
\end{split}
\end{equation}

Also since $p\beta q=2\cdot 7\cdot q<2\cdot 7\cdot 1/3=14/3(<5)$,
in dimensions $d=1$, using \eqref{feq}, we obtain for any
$s\in[0,T]$ and thus for any $s\in[a,t]$
\begin{equation}\label{m12un}
\begin{split}
\mathbf{E}(\|u_{n}(\cdot,s)\|_{L^\infty(\mathcal{D})}^{p\beta
q})\leq c.
\end{split}
\end{equation}

Using \eqref{m11un}, \eqref{m12un} in \eqref{d78un}, yields
\begin{equation}\label{d79un1}
\begin{split}
E_{1}(t) \leq& c(t-a)+c
\mathbf{E}\Big{(}\int_{a}^t\int_{\mathcal{D}}\|G(\cdot,y,t-s)\|_{L^\infty(\mathcal{D})}^{p\alpha}dyds\Big{)}
+c\int_{a}^t\mathbf{E}\Big{(}\|u_{n}(\cdot,s)\|_{L^\infty(\mathcal{D})}^{p\beta q}\Big{)}ds\\
\leq& c(t-a)+c(t-a)^{2/3} +c(t-a)\leq c(t-a)+c(t-a)^{2/3},
\end{split}
\end{equation}
uniformly for all $t$, and thus for $p=2$
\begin{equation}\label{d79un}
E_{1}(t)\leq c(t-a)+c(t-a)^{2/3}.
\end{equation}

Considering the term $E_{2}(t)$, by (1.12) of \cite{CW1}, we have,
as in deriving \eqref{he2}, but observing that $s\leq \tau\leq t$
and $a\leq s\leq t$, which yields $s\in[a,\tau]$ when changing
the order of integration
\begin{equation}\label{he2un}
\begin{split}
E_{2}(t)=&c\mathbf{E}\Big{(}\int_{a}^t\int_{\mathcal{D}}\Big \|
\int_{s}^{t}\int_{\mathcal{D}}|\Delta G(\cdot,z,
t-\tau)-G(\cdot,z, t-\tau)||D_{s,y}u_{n}(z, \tau)|  dz d \tau
\Big \|_{L^\infty(\mathcal{D})}^pdyds\Big{)}\\
 \leq&c
\int_{a}^t\mathbf{E}\Big{(}\int_a^{\tau} \int_{\mathcal{D}}\|
D_{s,y}u_{n}(\cdot, \tau)\|^p_{L^\infty(\mathcal{D})} dyds\Big )
d\tau.
\end{split}
\end{equation}

For the term $E_3(t)$, Fubini's Theorem and
Burkholder-Davis-Gundy inequality, together with the boundedness
of $\tilde{\mathcal{G}}_1$, yields as in \eqref{he4}
\begin{equation}\label{he4un}
\begin{split}
E_3(t)=&c \mathbf{E}\Big{(}\int_{a}^t\int_{\mathcal{D}}\Big
\|\int_{s}^{t}\int_{\mathcal{D}}G(\cdot,z, t-\tau)
\tilde{\mathcal{G}}_1(n)(z,\tau)D_{s,y} u_{n}(z,\tau) W(dz,d
\tau)\Big \|_{L^\infty(\mathcal{D})}^pdyds\Big{)} \\
\leq &c \int_{a}^t\int_{\mathcal{D}}\mathbf{E}\Big{(}\Big
\|\int_{a}^{\tau}\int_{\mathcal{D}}|G(\cdot,z, t-\tau)|^2 |D_{s,y}
u_{n}(z,\tau)|^2 dzd \tau\Big
\|_{L^\infty(\mathcal{D})}^{p/2}\Big{)}dyds.
\end{split}
\end{equation}
As in \eqref{he5}, we derive since $d=1$ and $p=2$
\begin{equation}\label{he5un}
\begin{split}
E_3(t)\leq c \int_{a}^t(t-\tau)^{-1/4}\mathbf{E} \bigg
(\int_a^\tau\int_{\mathcal{D}}\|D_{s,y}
u_{n,k}(\cdot,\tau)\|_{L^\infty(\mathcal{D})}^2dyds\bigg )d\tau .
\end{split}
\end{equation}

Hence, by the estimates \eqref{d79un}, \eqref{he2un} and
\eqref{he5un}, we get for \eqref{mmun}
\begin{equation}\label{d80un}
\begin{split}
\mathbf{E}\Big{(}\int_{a}^t\int_{\mathcal{D}}\|
D_{s,y}u_{n}(\cdot,t)\|_{L^\infty(\mathcal{D})}^2dyds\Big{)} \leq&
c(t-a)+(t-a)^{2/3}\\
&+c\int_{a}^t\mathbf{E}\Big{(}\int_a^{\tau} \int_{\mathcal{D}}\|
D_{s,y}u_{n}(\cdot,
\tau)\|^2_{L^\infty(\mathcal{D})} dyds\Big ) d\tau\\
&+ c \int_{a}^t(t-\tau)^{-1/4}\mathbf{E} \bigg
(\int_a^\tau\int_{\mathcal{D}}\|D_{s,y}
u_{n}(\cdot,\tau)\|_{L^\infty(\mathcal{D})}^2dyds\bigg )d\tau,
\end{split}
\end{equation}
for $c>0$ constants independent of $t$.

Define
$$L_n(t):=\mathbf{E}\Big{(}\int_{a}^t\int_{\mathcal{D}}\|
D_{s,y}u_{n}(\cdot,t)\|_{L^\infty(\mathcal{D})}^2dyds\Big{)},$$
then \eqref{d80un} is written as
\begin{equation}\label{gr1}
\begin{split}
L_n(t)\leq c(t-a)+c(t-a)^{2/3}+c\int_{a}^tL_n(\tau) d\tau+ c
\int_{a}^t(t-\tau)^{-1/4}L_n(\tau)d\tau.
\end{split}
\end{equation}
This yields
\begin{equation*}
\begin{split}
c \int_{a}^t(t-\tau)^{-1/4}L_n(\tau)d\tau\leq&
c(t-a)^{1+1-1/4}+c(t-a)^{2/3+1-1/4}\\
&+c(t-a)^{1-1/4}\int_a^tL_n(s)ds\\
&+c\int_a^t[\int_a^t(t-\tau)^{-1/4}(\tau-s)^{-1/4}d\tau]L_n(s)ds\\
\leq&
c(t-a)^{1+1-1/4}+c(t-a)^{2/3+1-1/4}\\
&+c(t-a)^{1-1/4}\int_a^tL_n(s)ds+c(t-a)^{1/4}\int_a^tL_n(s)ds\\
\leq& c(t-a)^{7/4}+c(t-a)^{17/12}+c\int_a^tL_n(s)ds.
\end{split}
\end{equation*}
Thus \eqref{gr1} becomes
\begin{equation}\label{gr2}
\begin{split}
L_n(t)\leq C_0(t,a)+c\int_a^tL_n(\tau)d\tau,
\end{split}
\end{equation}
for
$$C_0(t,a):=c(t-a)+c(t-a)^{2/3}+c(t-a)^{7/4}+c(t-a)^{17/12}.$$
By \eqref{gr2}, we get
\begin{equation}\label{gr3}
\begin{split}
L_n(t)\leq C_0(t,a),
\end{split}
\end{equation}
 which yields,
\begin{equation}\label{d80un4}
\begin{split}
\mathbf{E}\Big{(}\int_{a}^t\int_{\mathcal{D}}\|
&D_{s,y}u_{n}(\cdot,t)\|_{L^\infty(\mathcal{D})}^2dyds\Big{)}=L_n(t)\\
&\leq c(t-a)+c(t-a)^{2/3}+c(t-a)^{7/4}+c(t-a)^{17/12},
\end{split}
\end{equation}
uniformly for any $t$.

In the above, $c=C(n)>0$ is independent of $t$, but generally may
depend on $n$. Since $D_{s,y}u_{n}(\cdot,t)=0$ when $s>t$, then
for any $\hat{s}\geq t$, by using \eqref{d80un4}, we have
\begin{equation}\label{d80unn}
\begin{split}
 \mathbf{E}\Big{(}\int_{a}^{\hat{s}}\int_{\mathcal{D}}\|
D_{s,y}u_{n}(\cdot,t)\|_{L^\infty(\mathcal{D})}^2dyds\Big{)}=&
\mathbf{E}\Big{(}\int_{a}^t\int_{\mathcal{D}}\|
D_{s,y}u_{n}(\cdot,t)\|_{L^\infty(\mathcal{D})}^2dyds\Big{)}\\
\leq& c(t-a)+c(t-a)^{2/3}+c(t-a)^{7/4}+c(t-a)^{17/12}\\
\leq&
c(\hat{s}-a)+c(\hat{s}-a)^{2/3}+c(\hat{s}-a)^{7/4}+c(\hat{s}-a)^{17/12}.
\end{split}
\end{equation}
So, choosing in the above $a:=\hat{s}-\varepsilon\leq t$ (we need
$a\leq t$), we have for any $\hat{s}\geq t\geq
\hat{s}-\varepsilon$,
\begin{equation}\label{d80unn}
\begin{split}
 \mathbf{E}\Big{(}\int_{\hat{s}-\varepsilon}^{\hat{s}}\int_{\mathcal{D}}\|
D_{s,y}u_{n}(\cdot,t)\|_{L^\infty(\mathcal{D})}^2dyds\Big{)} \leq
c\varepsilon^{2/3},
\end{split}
\end{equation}
for $\varepsilon<1$. Taking supremum on any such
$t\in[\hat{s}-\varepsilon,\hat{s}]$, we have the result, i.e.,
\eqref{d84un}.

Moreover, we have
\begin{equation}\label{d84un25}
\begin{split}
\mathbf{E}\Big{(}\int_{t-\varepsilon}^{t}\int_{\mathcal{D}}\|
D_{s,y}u_{n}(\cdot,t)-&G(\cdot,y,t-s)\sigma(u_n(y,s))\|_{L^\infty(\mathcal{D})}^2dyds\Big{)}\leq
cE_2(t)+cE_3(t)\\
&\leq
c\varepsilon^{2/3}\varepsilon+c\varepsilon^{2/3}\varepsilon^{1-1/4}\\
&\leq c\varepsilon^{17/12},
\end{split}
\end{equation}
where we used \eqref{he2un} and \eqref{he5un} for
$a=t-\varepsilon$ and the estimate \eqref{d84un}. So, the estimate
\eqref{d84un*} is established.

\end{proof}

We are now ready to prove the next important theorem, which will
yield by localization the second result of the Main Theorem
\ref{mres} of this paper. Here, we need a non-degenerating extra
assumption for the diffusion $\sigma$.
\begin{theorem}\label{denu0}
 Under the assumptions of Theorem \ref{loc}, if additionally,
$\sigma$ satisfies \eqref{s1}, i.e $$|\sigma(x)|\geq c_0>0,$$ for
any $x\in\mathbb{R}$, then the law of the solution $u_n(x,t)$ of
\eqref{piecewise ff} when $t>0$ and $x\in(0,\pi)$, is absolutely
continuous with respect to the Lebesgue measure on $\mathbb{R}$.
\end{theorem}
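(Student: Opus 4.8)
The plan is to verify the hypothesis of Nualart's Theorem 2.1.3, recalled in Remark \ref{locderiv}: since by Proposition \ref{LL234} we have $u_n(x,t)\in D^{1,2}\subseteq D^{1,1}_{\rm loc}$, it suffices to prove that for each fixed $t>0$ and $x\in(0,\pi)$,
$$\|D_{\cdot,\cdot}u_n(x,t)\|_H^2=\int_0^t\int_{\mathcal{D}}|D_{y,s}u_n(x,t)|^2\,dy\,ds>0\qquad\text{almost surely}.$$
The underlying idea is that, for $s$ close to $t$, the Malliavin derivative $D_{y,s}u_n(x,t)$ given by \eqref{MDE} is dominated by its \emph{principal part} $G(x,y,t-s)\sigma(u_n(y,s))$, whose size is bounded below thanks to the non-degeneracy \eqref{s1}, while the two remaining integral terms in \eqref{MDE} are negligible by the second estimate \eqref{d84un*} of Lemma \ref{leps}.

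First I would fix $\varepsilon\in(0,\min\{1,t\})$ and use
$$\|D_{\cdot,\cdot}u_n(x,t)\|_H^2\geq\int_{t-\varepsilon}^t\int_{\mathcal{D}}|D_{y,s}u_n(x,t)|^2\,dy\,ds.$$
Writing, by \eqref{MDE}, $D_{y,s}u_n(x,t)=G(x,y,t-s)\sigma(u_n(y,s))+R_{y,s}(x,t)$, where $R_{y,s}(x,t)$ denotes the sum of the two integral terms over $(s,t)$, and using the elementary inequality $|a+b|^2\geq\tfrac12|a|^2-|b|^2$, I obtain
$$\int_{t-\varepsilon}^t\int_{\mathcal{D}}|D_{y,s}u_n(x,t)|^2\,dy\,ds\geq\tfrac12 I_\varepsilon-J_\varepsilon,$$
with
$$I_\varepsilon:=\int_{t-\varepsilon}^t\int_{\mathcal{D}}|G(x,y,t-s)|^2|\sigma(u_n(y,s))|^2\,dy\,ds,\qquad J_\varepsilon:=\int_{t-\varepsilon}^t\int_{\mathcal{D}}|R_{y,s}(x,t)|^2\,dy\,ds.$$

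For the lower bound on $I_\varepsilon$, the non-degeneracy \eqref{s1} gives $|\sigma(u_n(y,s))|^2\geq c_0^2$ pointwise, while the eigenfunction expansion \eqref{KJ2211} together with the orthonormality of $\{\alpha_k\}$ in $L^2(\mathcal{D})$ yields, for every $r>0$ and every $x$,
$$\int_{\mathcal{D}}|G(x,y,r)|^2\,dy=\sum_{k\geq 0}e^{-2(\lambda_k^2+\lambda_k)r}\alpha_k(x)^2\geq\alpha_0(x)^2=\frac1\pi,$$
where we kept only the $k=0$ term (for which $\lambda_0=0$) and discarded the remaining nonnegative terms. Hence $I_\varepsilon\geq c_0^2\,\varepsilon/\pi=:c_1\varepsilon$. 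For the upper bound on $J_\varepsilon$, since $|R_{y,s}(x,t)|\leq\|D_{s,y}u_n(\cdot,t)-G(\cdot,y,t-s)\sigma(u_n(y,s))\|_{L^\infty(\mathcal{D})}$, estimate \eqref{d84un*} of Lemma \ref{leps} gives $\mathbf{E}(J_\varepsilon)\leq C(n)\varepsilon^{17/12}$, and Chebyshev's inequality then yields
$$P\Big(J_\varepsilon\geq\tfrac{c_1}{4}\varepsilon\Big)\leq\frac{4\,\mathbf{E}(J_\varepsilon)}{c_1\varepsilon}\leq\frac{4C(n)}{c_1}\,\varepsilon^{5/12}.$$
On the complementary event one has $\int_{t-\varepsilon}^t\int_{\mathcal{D}}|D_{y,s}u_n(x,t)|^2\,dy\,ds\geq\tfrac12 c_1\varepsilon-\tfrac14 c_1\varepsilon=\tfrac14 c_1\varepsilon>0$, so that $P(\|D_{\cdot,\cdot}u_n(x,t)\|_H^2>0)\geq 1-\tfrac{4C(n)}{c_1}\varepsilon^{5/12}$ for every admissible $\varepsilon$; letting $\varepsilon\downarrow 0$ gives $P(\|D_{\cdot,\cdot}u_n(x,t)\|_H^2>0)=1$. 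An application of Nualart's Theorem 2.1.3 then establishes that the law of $u_n(x,t)$ is absolutely continuous with respect to the Lebesgue measure on $\mathbb{R}$.

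The step I expect to be the crux is the comparison of exponents: the principal part contributes to $I_\varepsilon$ at order $\varepsilon$, which must beat the order $\varepsilon^{17/12}$ of $\mathbf{E}(J_\varepsilon)$ in order for the Chebyshev bound to vanish as $\varepsilon\downarrow 0$. Securing that gain of a power $\varepsilon^{1/12}$ is exactly what the careful, $\sigma$-growth-sensitive estimate \eqref{d84un*} of Lemma \ref{leps} was designed to deliver, so that the present theorem is essentially a packaging of that estimate together with the non-degeneracy \eqref{s1} and the elementary lower bound on $\int_{\mathcal{D}}|G(x,y,r)|^2\,dy$.
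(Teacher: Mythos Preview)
Your proof is correct and follows the same overall strategy as the paper: split $D_{y,s}u_n(x,t)$ into its principal part $G(x,y,t-s)\sigma(u_n(y,s))$ and a remainder, bound the principal part from below via the non-degeneracy \eqref{s1}, control the remainder in expectation via \eqref{d84un*}, and conclude by Markov/Chebyshev and $\varepsilon\downarrow 0$.

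The one notable difference is in the lower bound for $I_\varepsilon$. The paper keeps the entire spectral sum $\sum_{k\ge 0}\alpha_k(x)^2 e^{-2(\lambda_k^2+\lambda_k)(t-s)}$, compares it to $\sum_{k\ge 1}\alpha_k(x)^2 e^{-4\lambda_k^2(t-s)}$, and invokes estimate (3.25) of \cite{CW1} to obtain $I_\varepsilon\ge C\varepsilon^{3/4}$; this leads to the final rate $1-c\varepsilon^{2/3}$. You instead keep only the $k=0$ term $\alpha_0(x)^2=1/\pi$, which gives the weaker but completely elementary bound $I_\varepsilon\ge c_1\varepsilon$ and the final rate $1-c\varepsilon^{5/12}$. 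Your route avoids the external reference and works because $17/12>1$ already; the paper's sharper exponent is not needed for the qualitative conclusion. Either computation is valid here, and yours is arguably the cleaner choice for this theorem.
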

\begin{proof}
Relation \eqref{MDE} yields
\begin{equation} \label{MDEa}
\begin{split}
 |D_{y,s}u_{n}(x,t)|^2=&\Big{|}\int_{s}^{t}\int_{\mathcal{D}}[\Delta
G(x,z,t-\tau)-G(x,z,t-\tau)]
\tilde{\mathcal{G}_2}(n)(z,\tau)D_{y,s}(u_{n}(z,\tau)) dz d\tau\\
&+G(x,y,t-s)\sigma(u_{n}(y,s))\\
&+\int_{s}^{t}\int_{\mathcal{D}}G(x,z,t-\tau)\tilde{\mathcal{G}_1}(n)(z,\tau)D_{y,s}(u_{n}(z,\tau))W(dz,d\tau)\Big{|}^2\\
\geq& \frac{1}{2}A-B,
\end{split}
\end{equation}
for
$$A(x,y,s,t):=G(x,y,t-s)^2\sigma(u_{n}(y,s))^2,$$
and
\begin{equation*}
\begin{split}
B(x,y,s,t):=&\Big{|}\int_{s}^{t}\int_{\mathcal{D}}[\Delta
G(x,z,t-\tau)-G(x,z,t-\tau)]
\tilde{\mathcal{G}_2}(n)(z,\tau)D_{y,s}(u_{n}(z,\tau)) dz d\tau\\
&+\int_{s}^{t}\int_{\mathcal{D}}G(x,z,t-\tau)\tilde{\mathcal{G}_1}(n)
(z,\tau)D_{y,s}(u_{n}(z,\tau))W(dz,d\tau)\Big{|}^2,
\end{split}
\end{equation*}
where we used that $(2^{-1/2}a+2^{1/2}b)^2\geq 0$ which yields
$(a+b)^2\geq \frac{1}{2}a^2-b^2$.

So, we have
\begin{equation} \label{fin1}
\begin{split}
\int_0^t\int_{\mathcal{D}}
|D_{y,s}u_{n}(x,t)|^2dyds\geq\int_{t-\varepsilon}^t\int_{\mathcal{D}}
|D_{y,s}u_{n}(x,t)|^2dyds \geq
\frac{1}{2}\int_{t-\varepsilon}^t\int_{\mathcal{D}}Adyds-\int_{t-\varepsilon}^t\int_{\mathcal{D}}Bdyds.
\end{split}
\end{equation}

We will give an upper bound in expectation for the term
$$\int_{t-\varepsilon}^t\int_{\mathcal{D}}Bdyds.$$
For this, we shall use Lemma \ref{leps} (relation
\eqref{d84un*}), which yields
\begin{equation}\label{fin2}
\mathbf{E}\Big{(}\int_{t-\varepsilon}^t\int_{\mathcal{D}}B(x,y,s,t)dyds\Big{)}\leq
\mathbf{E}\Big{(}\int_{t-\varepsilon}^t\int_{\mathcal{D}}\|B(\cdot,y,s,t)\|_{L^\infty(\mathcal{D})}^2dyds\Big{)}
\leq C(n)\varepsilon^{17/12}.
\end{equation}

We now provide a lower bound for
$\int_{t-\varepsilon}^t\int_{\mathcal{D}}Adyds$. The
non-degeneracy condition of the diffusion and using the spectrum
in $[0,\pi]$ of the negative Neumann Laplacian, yields
\begin{equation}\label{fin3}
\begin{split}
\int_{t-\varepsilon}^t\int_{\mathcal{D}}Adyds=&
\int_{t-\varepsilon}^t\int_{\mathcal{D}}G(x,y,t-s)^2\sigma(u_{n}(y,s))^2dyds\\
\geq& c\int_{t-\varepsilon}^t\int_{\mathcal{D}}G(x,y,t-s)^2dyds=
c\int_{t-\varepsilon}^t\Big{[}\sum_{k=0}^\infty a_k^2(x)e^{-2(\lambda_k^2+\lambda_k)(t-s)}\Big{]}ds\\
\geq&c\int_{t-\varepsilon}^t\Big{[}\sum_{k=0}^\infty
a_k^2(x)e^{-4\lambda_k^2(t-s)}\Big{]}ds
=c\sum_{k=1}^\infty a_k^2(x)\frac{1}{4\lambda_k^2}[1-e^{-4\lambda_k^2\varepsilon}]+C\varepsilon\\
=&c\frac{1}{2}\sum_{k=1}^\infty
a_k^2(x)\frac{1}{2\lambda_k^2}[1-e^{-2\lambda_k^2(2\varepsilon)}]+C\varepsilon
\geq C(2\varepsilon)^{1-d/4}=C\varepsilon^{3/4},
\end{split}
\end{equation}
where we used the orthonormal $L^2(\mathcal{D})$ eigenfunctions
basis $\{a_k\}$ for $k=0,1,2,\cdots$, in dimensions $d=1$, the
fact that $\lambda_k=k^2$, and that $(t-s)\geq 0$ and that
$-(\lambda_k^2+\lambda_k)=-(k^4+k^2)\geq -2\lambda_k^2=-2k^4$, and
the estimate (3.25) of \cite{CW1}. Thus, we have proven that
\begin{equation}\label{fin4}
\begin{split}
\int_{t-\varepsilon}^t\int_{\mathcal{D}}Adyds\geq
C_0\varepsilon^{3/4}.
\end{split}
\end{equation}

Using  the estimates \eqref{fin2}, \eqref{fin4}, we arrive at
\begin{equation}\label{fin5}
\begin{split}
P\Big{(}\int_0^T\int_{\mathcal{D}}|D_{y,s}u_{n}(x,t)|^2dyds>0\Big{)}\geq&
P(\frac{1}{2}\int_{t-\varepsilon}^t\int_{\mathcal{D}}A(x,y,s,t)dyds-
\int_{t-\varepsilon}^t\int_{\mathcal{D}}B(x,y,s,t)dyds>0\Big{)}\\
\geq &P(
\int_{t-\varepsilon}^t\int_{\mathcal{D}}B(x,y,s,t)dyds< \frac{C_0}{2}\varepsilon^{3/4}\Big{)}\\
\geq&
1-c\mathbf{E}\Big{(}\int_{t-\varepsilon}^t\int_{\mathcal{D}}B(x,y,s,t)dyds\Big{)}\varepsilon^{-3/4}\\
\geq&1
-c\varepsilon^{17/12}\varepsilon^{-3/4}=1-c\varepsilon^{2/3}\rightarrow
1,\mbox{ as  }\varepsilon\rightarrow 0,
\end{split}
\end{equation}
where we applied Markov's inequality.

This yields the result.

We note that the proof of this theorem was influenced by the very
interesting arguments of Cardon-Weber in \cite{CW1}, for an
analogous result, where the stochastic Cahn-Hilliard equation with
bounded noise diffusion was considered. However, we used in a
direct way the property of $\sigma$, i.e., that $|\sigma(x)|\geq
c_0>0$ for any $x\in\mathbb{R}$.
\end{proof}
\subsection{Absolute continuity of the stochastic solution $u$}
The next theorem establishes the second result of Main Theorem
\ref{mres}, this of the existence of a density for $u$.
\begin{theorem}\label{denu}
Under the assumptions of Theorem \ref{loc}, if additionally,
$\sigma$ satisfies \eqref{s1}, i.e., $$|\sigma(x)|\geq c_0>0,$$ for
any $x\in\mathbb{R}$, then the law of the solution $u$ of
\eqref{sm} is absolutely continuous with respect to the Lebesgue
measure on $\mathbb{R}$.
\end{theorem}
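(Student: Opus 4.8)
The plan is to deduce the absolute continuity of the law of $u$ from the corresponding property of the approximations $u_n$ established in Theorem \ref{denu0}, by combining the localization $(\Omega_n,u_n)$ constructed in Section 2 with the general criterion of Nualart (Theorem 2.1.3 of \cite{N}). Throughout I fix $t>0$ and $x\in(0,\pi)$, so that $u(x,t)$ is the scalar random variable whose law is in question, consistently with the statement of Theorem \ref{denu0}.

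First I would recall that by Theorem \ref{loc} we have $u\in L^{1,2}_{\rm loc}\subseteq D^{1,2}_{\rm loc}\subseteq D^{1,1}_{\rm loc}$, the last inclusion by H\"older's inequality as noted in Remark \ref{locderiv}, with $(\Omega_n,u_n)$ localizing $u$ in $D^{1,2}$ (hence in $D^{1,1}$): here $\Omega_n=\{\sup_{[0,T]}\sup_{\mathcal{D}}|u|<n\}$ satisfies $\Omega_n\uparrow\Omega$ a.s.\ because $u$ has a.s.\ continuous paths in dimension $d=1$ (\cite{AKM}), and $u=u_n$ a.s.\ on $\Omega_n$ since $f_n(u_n)=f(u_n)$ there. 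By Remark \ref{remd} and Remark \ref{locd}, the Malliavin derivative of $u$ is then unambiguously defined by $D_{y,s}u=D_{y,s}u_n$ on $\Omega_n$, for each $n\geq 1$.

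Next I would invoke Theorem \ref{denu0}, which under the additional non-degeneracy assumption \eqref{s1} gives, through the chain of estimates culminating in \eqref{fin5}, that $P\big(\|D_{\cdot,\cdot}u_n(x,t)\|_H^2>0\big)=1$ for every $n$. Since $D_{y,s}u(x,t)=D_{y,s}u_n(x,t)$ a.s.\ on $\Omega_n$, we get $\|D_{\cdot,\cdot}u(x,t)\|_H=\|D_{\cdot,\cdot}u_n(x,t)\|_H>0$ a.s.\ on $\Omega_n$; this is precisely the set-theoretic argument spelled out in Remark \ref{locderiv} (the inclusions $D_n^c\subseteq Z^c$ and $C\subseteq B$ together with $P(Z^c)=0$), which yields $P\big(\{\,\|D_{\cdot,\cdot}u(x,t)\|_H>0\,\}\cap\Omega_n\big)=P(\Omega_n)$. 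Letting $n\to\infty$ and using $P(\Omega_n)\to P(\Omega)=1$, we conclude that $\|D_{\cdot,\cdot}u(x,t)\|_H>0$ almost surely on $\Omega$.

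Finally I would apply Theorem 2.1.3 of \cite{N} (with $D:=D_{\cdot,\cdot}$ and $H:=L^2([0,T]\times\mathcal{D})$): since $u\in D^{1,1}_{\rm loc}$ and $\|D_{\cdot,\cdot}u\|_H>0$ a.s., the law of $u(x,t)$ is absolutely continuous with respect to the Lebesgue measure on $\mathbb{R}$, which is the claimed statement. The only delicate point — and the step I expect to require the most care — is the measure-theoretic bookkeeping needed to pass the strict positivity of the Malliavin norm from $u_n$ restricted to $\Omega_n$ to $u$ on all of $\Omega$; but this is exactly what is settled by the nested exhausting family $\Omega_n\uparrow\Omega$ and the event inclusions of Remark \ref{locderiv}, so no new estimate is needed here.
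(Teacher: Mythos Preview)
Your proposal is correct and follows essentially the same approach as the paper: the paper's own proof consists of a single sentence invoking Theorem \ref{denu0} and the localization argument of Remark \ref{locderiv}, and you have simply spelled out those steps (membership of $u$ in $D^{1,1}_{\rm loc}$, transfer of strict positivity of $\|Du_n\|_H$ from $\Omega_n$ to $\Omega$ via $\Omega_n\uparrow\Omega$, and application of Theorem 2.1.3 of \cite{N}) in full detail.
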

\begin{proof}
This is a direct result of Theorem \ref{denu0} through
localization, see the arguments of Remark \ref{locderiv}.
\end{proof}

\end{document}